\newtheorem{theorem}{Theorem}[section]
\newtheorem{corollary}[theorem]{Corollary}
\newtheorem{lemma}[theorem]{Lemma}
\theoremstyle{definition}
\newtheorem{definition}[theorem]{Definition}
\newtheorem{fact}{\bf Fact}
\newtheorem{problem}{\bf Problem}
\numberwithin{equation}{section}
\numberwithin{figure}{section}
\def\A{\mathcal{A}}
\def\B{\mathcal{B}}
\def\C{\mathbb{C}}
\def \ex{\mathrm{ex}}
\def \F{\mathcal{F}}
\def\la{\lambda}
\def\p{\partial}
\def \spex{\mathrm{spex}}
\def \x{\mathbf{x}}
\begin{document}
\title[Spectral Tur\'an Problems]{Spectral bipartite Tur\'{a}n problems on linear hypergraphs}

\author[C.-M. She]{Chuan-Ming She}
\address{Center for Pure Mathematics, School of Mathematical Sciences, Anhui University, Hefei 230601, P. R. China}
\email{shecm@stu.ahu.edu.cn}

\author[Y.-Z. Fan]{Yi-Zheng Fan*}
\address{Center for Pure Mathematics, School of Mathematical Sciences, Anhui University, Hefei 230601, P. R. China}
\email{fanyz@ahu.edu.cn}
\thanks{*The corresponding author. This work was supported by National Natural Science Foundation of China (No. 12331012).
}

\author[L. Kang]{Liying Kang}
\address{Department of Mathematics, Shanghai University, Shanghai 200444, P. R.  China}
\email{lykang@shu.edu.cn}

\subjclass[2000]{05C35, 05C65}

\keywords{Linear hypergraph; spectral extreme problem; adjacency tensor; spectral radius; Berge hypergraph}
	
\begin{abstract}
Let $F$ be a graph, and let $\B_r(F)$ be the class of $r$-uniform Berge-$F$ hypergraphs.
In this paper, we establish a relationship between the spectral radius of the adjacency tensor of a uniform hypergraph and its local structure through walks.
Based on the relationship, we give a spectral asymptotic bound for $\B_{r}(C_3)$-free linear $r$-uniform hypergraphs and upper bounds for the spectral radii of $\B_{r}(K_{2,t})$-free or $\{\B_{r}(K_{s,t}),\B_{r}(C_{3})\}$-free linear $r$-uniform hypergraphs, where $C_{3}$ and $K_{s,t}$ are respectively the triangle and the complete bipartite graph with one part having $s$ vertices and the other part having $t$ vertices.
 Our work implies an upper bound for the number of edges of $\{\B_{r}(K_{s,t}),\B_{r}(C_{3})\}$-free linear $r$-uniform hypergraphs and extends some of the existing research on (spectral) extremal problems of hypergraphs.
\end{abstract}

\maketitle
	
\section{Introduction}
A \emph{hypergraph} $H=(V(H),E(H))$ consists of a vertex set $V(H)$ and an edge set $E(H)$, where each edge of $E(H)$ is a subset of $V(H)$.
The hypergraph $H$ is called \emph{$r$-uniform} if each edge has exactly $r$ elements,
and is called \emph{linear} if any two edges intersect into at most one vertex.
Clearly, a simple graph is a $2$-uniform linear hypergraph.
A \emph{walk} $W$ of length $k$ in $H$, simply called a \emph{$k$-walk},
  is an alternating sequence of vertices and edges of the form $v_{1}e_{1}v_{2}e_{2}v_{3}\cdots v_{k}e_{k}v_{k+1}$,
where $v_{i}\ne v_{i+1}$ and $\left\{v_{i},v_{i+1}\right\}\subseteq e_{i}$ for $i \in [k]:=\{1,\ldots,k\}$.
The hypergraph $H$ is \emph{connected} if every two vertices are connected by a walk.
In the above walk $W$, if no vertices or edges are repeated in the sequence, then $W$ is called a \emph{Berge path};
if $v_1=v_{k+1}$ and except $v_1, v_{k+1}$ no vertices or edges are repeated, then $W$ is called a \emph{Berge cycle}.

Gerbner and Palmer \cite{GP2017} generalized the concepts of Berge paths and Berge cycles to general graphs in the study of Tur\'an type problem for hypergraphs.
Formally, let $F$ be a simple graph, and let $H$ be a hypergraph.
A hypergraph $H$ is called a \emph{Berge-$F$} \cite{GP2017} if $V(F) \subseteq V(H)$ and there is a bijection $\phi: E(F)\to E(H)$ such that $e\subseteq\phi(e)$ for each $e\in E(F)$.
Alternatively, $H$ is a Berge-$F$ if we can embed each edge of $F$ into a unique edge of $H$.
There is a common way to construct Berge hypergraphs by using expansion.
The \emph{$r$-expansion} of a simple graph $F$, denoted by $F^{r}$, is the $r$-uniform hypergraph obtained from $F$ by enlarging each edge of $F$ with a vertex set of size $r-2$ disjoint from $V(F)$ such that distinct edges are enlarged by disjoint vertex sets.
Note that for a fixed $F$ there are many hypergraphs that are Berge-$F$.
We use $\B_{r}(F)$ to denote the family of all $r$-uniform Berge-$F$ hypergraphs.
For example,  the family $\B_{3}(C_3)$ consists of $3$ hypergraphs in Fig. \ref{C3}, where $C_k$ denotes a cycle on $k$ vertices (as a simple graph).

\vspace{-2mm}
\begin{figure}[h]
\centering
\includegraphics[scale=.8]{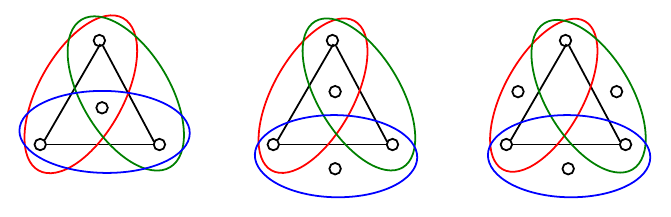}
\vspace{-2mm}
\caption{\small The family $\B_{3}(C_3)$, where the vertices in a colored closed curve form a hypergraph edge, and the vertices joined by a black line segment form a graph edge. }\label{C3}
\end{figure}

For a given family $\F$ of hypergraphs, we say $H$ is \emph{$\F$-free} if it does not contain a subhypergraph isomorphic to any hypergraph $F\in\F$.
Let $\ex_{r}(n,\F)$ and $\spex_{r}(n,\F)$ denote the maximum number of edges and the maximum spectral radius of $\F$-free $r$-uniform hypergraphs on $n$ vertices, respectively.
Similarly, write $\ex_{r}^{\text{lin}}(n,\F)$ and $\spex_{r}^{\text{lin}}(n,\F)$ for the maximum number of edges and the maximum spectral radius of $\F$-free linear
$r$-uniform hypergraphs on $n$ vertices, respectively.
If simple graphs are considered, the subscripts $r$ are omitted in the above notations.

Tur\'{a}n type problems on graphs and hypergraphs that aim to determine $\ex_{r}(n,\F)$ are the central topic of extremal combinatorics and have a vast literature; see, e.g. \cite{FS2013,MP2007,MS1965,Niki2002,Niki2006}.
The Erd\H{o}s-Stone-Simonovits theorem (\cite{Erdos1963,ES1966}) stated that $$\ex(n,F)=\left(1-\frac{1}{\chi(F)-1}\right)\binom{n}{2}+o(n^{2}),$$
 where $\chi(F)$ is the chromatic number of $F$.
However, it is a challenging problem to determine the order of magnitude of $\ex(n,F)$ for a bipartite graph $F$.

Let $K_{s,t}$ denote the complete bipartite graph with two parts having $s$ vertices and $t$ vertices respectively, where $s \le t$.
The famous Zarankiewicz problem just asks for the maximum number of edges in a $K_{s,t}$-free graph on $n$ vertices,
  and the matrix version of the Zarankiewicz problem is seeking for $z(m,n,s,t)$ that is the maximum number of ones in a $(0,1)$-matrix of size $m\times n$ that does not contain, $J_{s,t}$, the all-one matrix of size $s\times t$, as a submatrix.
Kóvari, Sós and Turán \cite{KST1955} presented one of the earliest bounds:
 \[\ex(n,K_{s,t})\le\frac{(t-1)^{\frac{1}{s}}}{2}n^{2-\frac{1}{s}}+\frac{s-1}{2}n.\]
 Füredi \cite{Furedi1996JCTA} gave the case of $s=2$:
 \[\lim_{n\to \infty}\ex(n,K_{2,t})n^{-\frac{3}{2}}=\frac{\sqrt{t-1}}{2}.\]
For the general case, Füredi \cite{Furedi1996CPC} improved the asymptotic coefficient $(t-1)^{\frac{1}{s}}$ to $(t-s+1)^{\frac{1}{s}}$:
 \[\ex(n,K_{s,t})\le\frac{(t-s+1)^{\frac{1}{s}}}{2}n^{2-\frac{1}{s}}+\frac{s}{2}n^{2-\frac{2}{s}}+\frac{s}{2}n.\]

 With the development of spectral extremal graph theory, the above results have spectral versions as well.
 Babai and Barry \cite{BG2009} proposed the spectral Zarankiewicz problem and gave the upper bound
 \[\spex(n,K_{s,t})\le\left((t-1)^{\frac{1}{s}}+o(1)\right)n^{1-\frac{1}{s}}, ~ 2\le s\le t.\]
 Furthermore, Nikiforov \cite{Niki2010} presented the following upper bounds:
 \[\spex(n,K_{2,t})\le \sqrt{(t-1)(n-1)+\frac{1}{4}} + \frac{1}{2}, \mbox{~for~} 2\le t,\]
  \[\spex(n,K_{s,t})\le(t-s+1)^{\frac{1}{s}}n^{1-\frac{1}{s}}+(s-1)n^{1-\frac{2}{s}}
  +s-2, \mbox{~for~} 3\le s\le t.\]

Due to the difficulty of a general hypergraph Tur\'an problem,
linear hypergraph Tur\'an type problems received more attention.
For example, Ruzsa and Szemer\'edi \cite{RS1978} considered the
$(6,3)$-problem, the maximum number of edges of $3$-uniform hypergraphs not carrying three edges on six vertices.
Erd\H{o}s, Frankl and R\"odl \cite{EFR1986} investigated the generalized $(6,3)$-problem, the maximum number of edges in an $r$-uniform hypergraph on $n$ vertices and in which the union of any three edges has size greater than $3r-3$ (equivalently, the number $\ex_{r}^{\text{lin}}(n,\B_{r}(C_{3}))$), and presented that \begin{equation}\label{6-3prob}
n^{2- \epsilon}<\ex_{r}^{\text{lin}}(n,\B_{r}(C_{3}))=o(n^{2}),
\end{equation}
for any $\epsilon> 0$ when $n$ is large enough.
Lazebnik and Verstra\"ete \cite{LV2003} gave the upper bound of the number of edges in an $r$-uniform hypergraphs $H$ on $n$ vertices without cycles of length less than five (implying that $H$ is linear and $\{\B_r(C_3),\B_r(C_4)\}$-free):
 \[e(H)\le\frac{1}{r(r-1)}n^{\frac{3}{2}}+\frac{r-2}{2r(r-1)}n+O(n^{-\frac{1}{2}}),\]
 where $e(H)$ denotes the number of edges of $H$;
 in particular, when $r=3$, if $H$ has the maximum number of edges, then
 \[e(H)=\frac{1}{6}n^{\frac{3}{2}}+o(n^{\frac{3}{2}}).\]
Timmons \cite{Timmons2016} generalized the result and presented
 \begin{equation}\label{Tim}
\ex_{r}^{\text{lin}}(n,\left\{\B_{r}(C_{3}),\B_{r}(K_{2,t})\right\})
  \le\frac{\sqrt{t-1}}{r(r-1)}n^{\frac{3}{2}}+\frac{n}{r},
  \end{equation}
  as $K_{2,2} \cong C_4$.
F\"uredi and \"Ozkahya~\cite{FO2017} proved that for $k\ge 2$,
   \[\ex_{3}^{\text{lin}}(n,\B_{r}(C_{2k+1}))\le 2kn^{1+\frac{1}{k}}+9kn.\]
Gerbner, Methuku and Vizer \cite{GMV2019} proved that for all $r,t\ge 2$, \[\ex_{r}^{\text{lin}}(n,\B_{r}(K_{2,t}))\le\frac{\sqrt{t-1}}{r(r-1)}n^{\frac{3}{2}}+O(n).\]
Ergemlidze, Győri and Methuku \cite{EGM2019} proved that \[\ex_{3}^{\text{lin}}(n,\B_{3}(C_{5}))=\frac{1}{3\sqrt{3}}n^{\frac{3}{2}}+O(n).\]
For all integers $r\ge 2$ and $2 \le s \le t$, Gao and Chang~\cite{GC2021} gave \[\ex_{r}^{\text{lin}}(n,K_{s,t}^{r})\le\frac{(t-1)^{\frac{1}{s}}}{r(r-1)}n^{2-\frac{1}{s}}+O(n^{2-\frac{2}{s}}),\]
     and
      \begin{equation}\label{Gao}\ex_{3}^{\text{lin}}(n,\left\{C_{3}^{3}, K_{s,t}^{3}\right\})\le
     \frac{(t-s+1)^{\frac{1}{s}}}{6}n^{2-\frac{1}{s}}
     +\frac{s-1}{6}n^{2-\frac{2}{s}}+\frac{s-2}{6}n.
     \end{equation}
Gao and Chang's result yields the corresponding bounds for some related Berge hypergraphs, as
 \[\ex^{\text{lin}}_{r}(n,\B_{r}(F))\le \ex^{\text{lin}}_{r}(n,F^{r}).\]

The spectral version of linear hypergraph Tur\'an type problems is seeking for the maximum spectral radius of $n$-vertex $\F$-free linear $r$-uniform hypergraphs for some given hypergraph family $\F$,
where the spectral radius refers to the maximum modulus of the eigenvalues of the adjacency tensor of a uniform hypergraph.
Hou, Chang and Cooper \cite{HCC2019} proved that
 \begin{equation}\label{Hou}
\spex_{r}^{\text{lin}}(n,\B_{r}(C_{4}))\le\sqrt{\frac{3}{2}+\frac{1}{2(r-1)}}(n-1)^{\frac{1}{2}}+O(n^{-\frac{1}{2}}).
\end{equation}
Gao, Chang and Hou~\cite{GCH2022} proved
\[\spex_{r}^{\text{lin}}(n,K_{r+1}^{r})\le \frac{n}{r},\]
with equality if and only if $r \mid n$ and a $K_{r+1}^{r}$-free linear $r$-uniform  hypergraph whose spectral radius attains the upper bound is a transversal design with $n$ vertices and $r$ groups.
She, Fan, Kang and Hou~\cite{SFKH2023} generalized the result to $F^{r}$ when $\chi(F)\ge r+1\ge 3$:
 \[\spex_{r}^{\text{lin}}(n,F^{r})=\frac{1}{r-1}\left (1-\frac{1}{\chi(F)-1}\right)n+o(n).\]

In this paper, we give a spectral analog result to the generalized $(6,3)$-problem or the asymptotic value of $\spex^{\text{lin}}_r(n, \B_r(C_3))$ (see Theorem \ref{spec-63-prob}),
and give an upper bound for the spectral radius of $\B_{r}(K_{2,t})$-free linear hypergraphs (see Theorem \ref{spec-K2t}), which extends the work of Hou, Chang and Cooper \cite{HCC2019} (Eq. (\ref{Hou})).
We also present an upper bound for the spectral radius of
$\{\B_{r}(K_{s,t}),\B_{r}(C_{3})\}$-free linear hypergraphs for a general $s,t$ (see Theorem \ref{mainresult}), and consequently we get an upper bound for the number of edges of $\{\B_{r}(K_{s,t}),\B_{r}(C_{3})\}$-free linear hypergraphs (see Corollary \ref{main-coro}), which extends the work of Timmons \cite{Timmons2016} (Eq. (\ref{Tim})) and that of Gao and Chang \cite{GC2021} (Eq. (\ref{Gao})).
The main technique used in this paper is to establish a relationship between the spectral radius of a uniform hypergraph and its local structure via $1$-walks and $2$-walks.

\section{Preliminaries}
For positive integers $r$ and $n$,
a complex \emph{tensor} (also called \emph{hypermatrix} \cite{CD2012}) $\A=(a_{i_{1}i_{2}\dots i_{r}})$ of order $r$ and dimension $n$ refers to a multidimensional array $a_{i_{1}i_{2}\dots i_{r}}\in\C$ for all $i_{1},i_{2},\dots,i_{r}\in[n]$.
In 2005, Lim~\cite{Lim2005} and Qi~\cite{Qi2005} independently introduced the eigenvalues of tensors as follows.
If there exists a number $\la \in \C$ and a nonzero vector $\x\in\C^{n}$ such that
		\begin{equation}\label{ev}
					\A\x^{r-1}=\la\x^{[r-1]},
		\end{equation}
 then $\la$ is called an \emph{eigenvalue} of $\A$, and $\x$ is called an \emph{eigenvector} of $\A$ corresponding to the eigenvalue $\la$,
 where $\x^{[r-1]}:=(x_i^{r-1}) \in \C^n$,
 and $\A\x^{r-1} \in \C^{n}$ is defined by
 \[(\A\x^{r-1})_{i}=\sum_{i_{2},\ldots,i_{r} \in [n]} a_{ii_{2}\dots i_{r}} x_{i_{2}}\cdots x_{i_{r}}, ~ i \in [n].\]
  The \emph{spectral radius} of $\A$, denoted by $\rho(\A)$, is defined to be the maximum modulus of the eigenvalues of $\A$.
	
In 2012, Cooper and Dutle~\cite{CD2012} introduced the \emph{adjacency tensor} of an $r$-uniform hypergraph $H$ with vertex set $V(H)=\{v_{1},v_{2},\dots,v_{n}\}$, which is an order $r$ dimension $n$ tensor $\A(H)=(a_{i_{1}i_{2}\dots i_{r}})$ whose $(i_{1},i_{2},\dots,i_{r})$-entry is given by
\[a_{i_{1}i_{2}\dots i_{r}}=
\begin{cases}\frac{1}{(r-1)!}, &\text{if }  \{v_{i_{1}},v_{i_{2}},\dots,v_{i_{r}}\}\in E(H);\\~ 0, &\text{otherwise}.
\end{cases} \]

The weak irreducibility of nonnegative tensors was defined by Friedland, Gaubert and Han~\cite{FGH2013}.
It was proved that an $r$-uniform hypergraph $H$ is connected if and only if its adjacency tensor $\A(H)$ is weakly irreducible (see~\cite{FGH2013} and~\cite{YY2011}).
The spectral property of nonnegative tensors or connected hypergraphs was investigated \cite{CD2012,FBH2019,FHB2022,FHBZL2019}
The Perron-Frobenius theorem of nonnegative matrices was generalized to nonnegative tensors, part of which is stated below.
	
\begin{theorem}[\cite{CPT2008,YY2011}]\label{PFthm}
Let $\A$ be a nonnegative tensor of order $r$ and dimension $n$. Then the following statements hold.
\begin{itemize}	
\item[{\rm (1)}] $\rho(\A)$ is an eigenvalue of $\A$ corresponding to a nonnegative eigenvector.
		
\item[{\rm (2)}] If furthermore $\A$ is weakly irreducible, then $\rho(\A)$ is the unique eigenvalue of $\A$ corresponding to the unique positive eigenvector up to a positive scalar.
\end{itemize}
\end{theorem}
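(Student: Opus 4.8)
The plan is to prove the two parts in turn, relying on the Collatz--Wielandt description of $\rho(\A)$ together with a Brouwer fixed-point argument for existence, and on the combinatorial content of weak irreducibility for positivity and uniqueness. Throughout I would work with the order-preserving, degree-one homogeneous operator $T(\x)=(\A\x^{r-1})^{[1/(r-1)]}$ defined on the nonnegative orthant, whose fixed directions are exactly the nonnegative eigenvectors of $\A$, and with the scalar functional $m(\x)=\min_{i:\,x_i>0}(\A\x^{r-1})_i/x_i^{r-1}$. Set $\rho^{*}=\sup_{\x\ge 0,\,\x\ne 0} m(\x)$; the whole argument amounts to showing that $\rho^{*}$ is attained by a nonnegative eigenvector and that $\rho^{*}=\rho(\A)$.

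For part (1) I would first record that every eigenvalue $\la$ satisfies $|\la|\le\rho^{*}$: if $\A\mathbf{w}^{r-1}=\la\,\mathbf{w}^{[r-1]}$ with $\mathbf{w}\ne 0$, then by the triangle inequality and $\A\ge 0$ one has $|\la|\,|w_i|^{r-1}=|(\A\mathbf{w}^{r-1})_i|\le(\A|\mathbf{w}|^{r-1})_i$ for all $i$, whence $m(|\mathbf{w}|)\ge|\la|$ and so $\rho^{*}\ge|\la|$. To produce an eigenvector attaining $\rho^{*}$ I would perturb to the positive case: let $\mathcal{E}$ be the all-ones tensor, put $\A_{\varepsilon}=\A+\varepsilon\mathcal{E}$, and on the compact convex simplex $\Delta=\{\x\ge 0:\sum_i x_i=1\}$ consider the normalization of $(\A_{\varepsilon}\x^{r-1})^{[1/(r-1)]}$. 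Since $\A_{\varepsilon}$ is entrywise positive, this map is continuous on all of $\Delta$, so Brouwer's theorem yields a fixed point $\x_{\varepsilon}$, i.e. a positive eigenpair $(\la_{\varepsilon},\x_{\varepsilon})$ with $\la_{\varepsilon}=\rho(\A_{\varepsilon})=\rho^{*}(\A_{\varepsilon})$ by the inequality just proved. Letting $\varepsilon\downarrow 0$, the $\la_{\varepsilon}$ decrease to a limit and, extracting a convergent subsequence of the $\x_{\varepsilon}\in\Delta$, I would pass to the limit in the eigen-equation to obtain a nonnegative eigenvector $\x_0$ with eigenvalue $\rho^{*}=\rho(\A)$. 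This gives part (1).

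For part (2) weak irreducibility enters through strong connectivity of the associated digraph $G(\A)$, which records an arc $i\to j$ whenever some entry $a_{ii_2\cdots i_r}>0$ has $j\in\{i_2,\dots,i_r\}$. First I would upgrade $\x_0$ to a positive vector: the support of a nonnegative eigenvector is invariant under the hyper-arc relation encoded by $T$, so if it were a proper nonempty subset it would contradict strong connectivity; hence $\x_0>0$ and $\rho(\A)$ has a positive eigenvector. For uniqueness, suppose $\x,\y>0$ are both eigenvectors for $\rho(\A)$ and set $\theta=\max_i y_i/x_i$, so that $\y\le\theta\x$ with equality on a nonempty set $E$ and strict inequality on its complement $D$. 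Since $\x,\y$ are fixed directions of $T$ with the same scalar $\rho(\A)^{1/(r-1)}$, the vectors $T(\y)$ and $T(\theta\x)$ agree on $E$; but weak irreducibility makes $T$ strictly monotone along arcs, so if $y_j<\theta x_j$ for some $j\in D$ then $T(\y)_i<T(\theta\x)_i$ for every $i$ with an arc to $j$. Agreement on $E$ therefore forbids any arc from $E$ into $D$, and strong connectivity forces $D=\varnothing$, i.e. $\y=\theta\x$. Finally, any eigenvalue $\mu$ with a positive eigenvector $\mathbf{z}$ satisfies $m(\mathbf{z})=\mu$ and, by the dual $\min$--$\max$ form of Collatz--Wielandt in the irreducible case, $\mu=\rho(\A)$, giving uniqueness of the eigenvalue as well.

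The routine ingredients are the perturbation-plus-Brouwer existence and the triangle-inequality bound; the main obstacle is the genuinely nonlinear part (2), where the absence of a matrix-style $(\,\mathcal{I}+\A)^{n-1}>0$ trick means both the positivity of $\x_0$ and the projective uniqueness must be driven purely by the strict-monotonicity-along-arcs property of $T$ together with strong connectivity of $G(\A)$. Verifying that strict monotonicity and translating the support/equality sets into the arc structure of $G(\A)$ is where the care is needed.
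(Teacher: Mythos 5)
The paper never proves this theorem; it is quoted from \cite{CPT2008,YY2011}, so your attempt has to be measured against the standard proofs in those references. Your part (1) is, in outline, exactly the Yang--Yang route (Collatz--Wielandt functional $m$, perturbation $\mathcal{A}_\varepsilon=\mathcal{A}+\varepsilon\mathcal{E}$, Brouwer on the simplex, then $\varepsilon\downarrow 0$), and it is essentially correct, with one gloss: the assertion that $\lambda_\varepsilon=\rho^{*}(\mathcal{A}_\varepsilon)$ ``by the inequality just proved'' does not follow, since that inequality only gives $\rho(\mathcal{A}_\varepsilon)\le\rho^{*}(\mathcal{A}_\varepsilon)$; you still need the reverse Collatz--Wielandt bound $m(\mathbf{y})\le\lambda_\varepsilon$ for all $\mathbf{y}\ge 0$. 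This is fillable in two lines: take $t>0$ minimal with $\mathbf{y}\le t\mathbf{x}_\varepsilon$ and compare $\mathcal{A}_\varepsilon\mathbf{y}^{r-1}\le t^{r-1}\mathcal{A}_\varepsilon\mathbf{x}_\varepsilon^{r-1}$ at a coordinate where $y_j=t x_{\varepsilon,j}$. The same scaling trick also makes your two uniqueness claims in part (2) sound: the $\theta=\max_i y_i/x_i$ argument between two \emph{positive} eigenvectors is correct (strictness survives under weak irreducibility precisely because all factors in the products are positive), and the identification $\mu=\rho(\mathcal{A})$ for any eigenvalue with a positive eigenvector follows from two applications of the scaling comparison rather than any citation of the irreducible min--max theorem.

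The genuine gap is the positivity of $\mathbf{x}_0$. Your claim that ``the support of a nonnegative eigenvector is invariant under the hyper-arc relation, so a proper support contradicts strong connectivity'' is the matrix ($r=2$) argument, and it is false for $r\ge 3$ under weak irreducibility. At a zero coordinate $i$, the equation $0=(\mathcal{A}\mathbf{x}_0^{r-1})_i$ only forces $a_{ii_2\cdots i_r}=0$ when \emph{all} of $i_2,\dots,i_r$ lie in the support $S$; an entry realizing an arc $i\to j$ with $j\in S$ may contain another index outside $S$, so it survives, and no arc into $S$ is excluded. Concretely, for $r=3$, $n=2$, take $a_{112}=a_{121}=a_{211}=1$ and all other entries $0$: the digraph has arcs $1\to 1$, $1\to 2$, $2\to 1$, so $\mathcal{A}$ is weakly irreducible, yet $(0,1)$ is a nonnegative eigenvector (with eigenvalue $0$, while $\rho(\mathcal{A})\ge m(\mathbf{1})=1$), so supports are \emph{not} invariant; note your argument nowhere uses $\lambda=\rho(\mathcal{A})$, so it would ``prove'' a false statement. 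Your support argument is valid only for the strictly stronger Chang--Pearson--Zhang notion of irreducibility. For $\lambda=\rho(\mathcal{A})$ the positivity claim is true, but the known proofs need more: restricting to $S$ shows $\rho(\mathcal{A})$ is an eigenvalue of the principal subtensor $\mathcal{A}[S]$ with a positive eigenvector, and one then contradicts the strict comparison lemma of \cite{YY2011} ($\rho(\mathcal{B})<\rho(\mathcal{A})$ whenever $\mathcal{B}\le\mathcal{A}$, $\mathcal{B}\ne\mathcal{A}$ and $\mathcal{A}$ is weakly irreducible), or one invokes the nonlinear Perron--Frobenius (Hilbert-metric) machinery of \cite{FGH2013}. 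Supplying one of these is the missing idea; as written, part (2) rests on a lemma with an explicit counterexample.
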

	
\begin{definition}[\cite{CTS2023}]\label{2shadow}
For a hypergraph $H=(V,E)$, the \emph{$2$-shadow of $H$}, denoted by $\partial H$,  is the multigraph formed by replacing each edge $e$ of $H$ with a complete graph on the vertices of $e$.
That is,
\[\partial H=\left(V,\partial E,\varphi\right),\]
where $\partial E=\left\{\binom{e}{2}:e\in E(H)\right\} $ and $\varphi$ is a multiplicity function on $\partial E$ that satisfies
\[ \varphi(f)=\left|\left\{e\in E(H):f\subseteq e\right\}\right| \] for any $f\in\partial E$.
\end{definition}

For a multigraph $G=(V,E,\varphi)$, where $\varphi$ is a  multiplicity function on $E$,
the \emph{adjacency matrix} $A(G)=(a_{uv})$ is the matrix of order $\left|V\right|\times\left|V\right|$ with entries
 \[ a_{uv}=\begin{cases}
	\varphi(\{u,v\}), & \text{if }u\ne v \text{~and } \{u,v\} \in E; \\
	~ 0, & \text{otherwise}.
\end{cases} \]
In this paper, the \emph{spectral radius} and the \emph{eigenvectors} of a uniform hypergraph or a multigraph $H$ are referring to its adjacency tensor or adjacency matrix, and the spectral radius of $H$ is denoted by $\rho(H)$.

Let $H$ be a hypergraph and let $v$ be a vertex of $H$.
The \emph{degree} of $v$, denoted by $d(v)$, is defined to be the number of edges of $H$ that contain the vertex $v$.
The hypergraph $H$ is called \emph{regular} if all its vertices have the same degree.
Using a method similar to that in~\cite{SFKH2023}, we establish a relation between the spectral radius of a uniform hypergraph and that of its $2$-shadow graph.

\begin{lemma}[\cite{SFKH2023}]\label{spec-2shadow}
Let $H$ be an $r$-uniform hypergraph.
Then \[\rho(H)\le\frac{1}{r-1}\rho(\p H).\]
If $H$ is also connected, then the equality holds if and only if $H$ is regular.
\end{lemma}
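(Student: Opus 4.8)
The plan is to test the Rayleigh quotient of the adjacency matrix $A(\p H)$ against a vector built from the Perron eigenvector of $\A(H)$, and to control the resulting edge sums by the AM--GM inequality. First I would invoke Theorem~\ref{PFthm}(1) to obtain a nonnegative eigenvector $\x\ge 0$ of $\A(H)$ associated with $\rho(H)$. Writing \eqref{ev} coordinatewise, and using that each edge $e\ni i$ contributes the monomial $\prod_{j\in e\setminus\{i\}}x_j$, gives for every vertex $i$
\begin{equation}\label{plan-ev}
\rho(H)\,x_i^{r-1}=\sum_{e\ni i}\ \prod_{j\in e\setminus\{i\}} x_j .
\end{equation}
Multiplying \eqref{plan-ev} by $x_i$ and summing over $i$ would then produce the normalization identity
\begin{equation}\label{plan-norm}
\rho(H)\sum_{i} x_i^{r}=r\sum_{e\in E(H)}\ \prod_{j\in e} x_j ,
\end{equation}
which is the bridge between the tensor eigenvalue and the edge-product sums that appear for $\p H$.

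Next I would set $y_i=x_i^{r/2}$ (legitimate since $x_i\ge 0$) and expand the quadratic form of $A(\p H)$. Because the multiplicity $\varphi(\{u,v\})$ counts the edges of $H$ containing $\{u,v\}$, rearranging the double sum gives
\[
\mathbf{y}^{\top}A(\p H)\mathbf{y}=2\sum_{e\in E(H)}\ \sum_{\{u,v\}\subseteq e} x_u^{r/2}x_v^{r/2}.
\]
I would then apply AM--GM to the $\binom{r}{2}$ terms attached to a single edge $e$; since every vertex of $e$ lies in exactly $r-1$ of these pairs, the geometric mean equals $\prod_{j\in e}x_j$, yielding $\sum_{\{u,v\}\subseteq e}x_u^{r/2}x_v^{r/2}\ge \binom{r}{2}\prod_{j\in e}x_j$. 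Combining this edgewise estimate with \eqref{plan-norm} and with $\mathbf{y}^{\top}\mathbf{y}=\sum_i x_i^r$ gives
\[
\mathbf{y}^{\top}A(\p H)\mathbf{y}\ge r(r-1)\sum_{e\in E(H)}\prod_{j\in e}x_j=(r-1)\rho(H)\,\mathbf{y}^{\top}\mathbf{y}.
\]
Since $A(\p H)$ is a nonnegative symmetric matrix, the Rayleigh--Ritz bound $\rho(\p H)\ge \mathbf{y}^{\top}A(\p H)\mathbf{y}/\mathbf{y}^{\top}\mathbf{y}$ then delivers $\rho(\p H)\ge (r-1)\rho(H)$, i.e. the claimed inequality.

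For the equality statement I would assume $H$ connected, so that $\x>0$ by Theorem~\ref{PFthm}(2). The key observation is that overall equality $\rho(H)=\tfrac{1}{r-1}\rho(\p H)$ forces \emph{both} the Rayleigh--Ritz step and \emph{every} edgewise AM--GM step to be tight. Tightness of AM--GM on each edge $e$ means all products $x_u^{r/2}x_v^{r/2}$ with $\{u,v\}\subseteq e$ coincide, hence all coordinates $x_j$, $j\in e$, are equal; propagating this equality along walks and using connectedness shows $\x$ is constant. Substituting a constant $\x$ into \eqref{plan-ev} gives $\rho(H)=d(i)$ for all $i$, so $H$ is regular. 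Conversely, if $H$ is regular of degree $d$, then $\x=\mathbf 1$ satisfies \eqref{plan-ev} with $\rho(H)=d$, while $A(\p H)\mathbf 1=(r-1)d\,\mathbf 1$ gives $\rho(\p H)=(r-1)d$, so equality holds. The step I expect to require the most care is precisely this equality analysis — verifying that global equality forces the two inequalities to be simultaneously tight, and then converting the AM--GM equality condition into regularity via connectedness; by contrast, the inequality itself is a fairly mechanical combination of AM--GM with the normalization identity \eqref{plan-norm}.
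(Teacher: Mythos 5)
Your proof is correct. For reference, the paper does not prove this lemma itself but imports it from \cite{SFKH2023}; the closest argument actually visible in the paper is the proof of Lemma~\ref{spec-ineq-walk}, which runs the same AM--GM idea \emph{entrywise}: from $\rho x_v^{r-1}=\sum_{e\in E_v}\x^{e\setminus\{v\}}$ and $\x^{e\setminus\{v\}}\le\frac{1}{r-1}\sum_{u\in e\setminus\{v\}}x_u^{r-1}$ one obtains $A(\p H)\,\mathbf{z}\ge(r-1)\rho\,\mathbf{z}$ entrywise for $\mathbf{z}=(x_u^{r-1})_u$, and Perron--Frobenius comparison for nonnegative matrices then yields $\rho(\p H)\ge(r-1)\rho(H)$. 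You instead test the Rayleigh quotient of $A(\p H)$ at $\mathbf{y}=(x_i^{r/2})_i$ and apply AM--GM to the $\binom{r}{2}$ pair-products within each edge; the two routes hinge on the identical inequality and are equally valid, but your quadratic-form version makes the equality analysis particularly transparent: global equality forces every edgewise AM--GM to be tight, whence (with $\x>0$ from Theorem~\ref{PFthm}(2)) $\x$ is constant on each edge and, by connectedness, constant everywhere, giving $\rho(H)=d(v)$ for all $v$. Two small points to tighten. First, your deduction that $x_u=x_v$ for $u,v\in e$ compares the pairs $\{u,w\}$ and $\{v,w\}$ for a third vertex $w\in e$, so it needs $r\ge 3$; this is unavoidable, since for $r=2$ one has $\p H=H$ and equality holds for every connected graph, so the ``only if'' clause of the lemma is implicitly an $r\ge 3$ statement, and it would be worth saying so. Second, in the converse direction, concluding $\rho(H)=d$ from the fact that $\mathbf{1}$ is a positive eigenvector with eigenvalue $d$ uses the uniqueness part of Theorem~\ref{PFthm}(2) (weak irreducibility), which is exactly where the connectedness hypothesis enters that direction; as written, your phrase ``satisfies \eqref{plan-ev} with $\rho(H)=d$'' glosses over this step.
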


Let $u,v$ be two vertices of a hypergraph $H$.
Denote by $w_{k}(u,v)$ the number of $k$-walks of $H$ starting at $u$ ending at $v$.
Therefore, the number of $k$-walks starting at $u$ is $w_{k}(u):=\sum_{v\in V(H)}w_{k}(u,v)$.
We apply the local structure of a uniform hypergraph to bound the spectral radius of the hypergraph.

\begin{lemma}\label{spec-ineq-walk}
 Let $H$ be an $r$-uniform hypergraph with spectral radius $\rho:=\rho(H)$.
 If $w_{2}(u)\le Pw_{1}(u)+(r-1)Q$ for any vertex $u\in V(H)$,
 then \[\rho^{2}-\frac{P}{r-1}\rho-\frac{Q}{r-1}\le 0,\]
 where $P$ and $Q$ are parameters independent of the choice of $u$.
\end{lemma}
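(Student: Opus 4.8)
The plan is to recast the hypothesis on walk counts as a matrix inequality for the adjacency matrix of the $2$-shadow $\partial H$, and then distill from it a scalar quadratic inequality by pairing with the Perron eigenvector and invoking Lemma \ref{spec-2shadow}. Write $B=A(\partial H)$ for the adjacency matrix of the $2$-shadow, which is symmetric and nonnegative, and let $\mathbf{1}$ be the all-ones vector. The key observation is that the walk counts are row sums of powers of $B$: since $B_{uv}=w_{1}(u,v)$ equals the number of edges of $H$ containing both $u$ and $v$, we have $\sum_{v}w_{1}(u,v)=w_{1}(u)$, and because a $2$-walk from $u$ to $w$ is a $1$-walk from $u$ to some $v$ followed by a $1$-walk from $v$ to $w$, also $w_{2}(u,w)=\sum_{v}w_{1}(u,v)w_{1}(v,w)=(B^{2})_{uw}$. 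Thus
\[(B\mathbf{1})_u=w_1(u),\qquad (B^2\mathbf{1})_u=w_2(u),\]
and the hypothesis $w_{2}(u)\le P\,w_{1}(u)+(r-1)Q$ for every vertex $u$ is exactly the entrywise vector inequality $B^{2}\mathbf{1}\le P\,B\mathbf{1}+(r-1)Q\,\mathbf{1}$.

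Next I would take a Perron eigenvector $\mathbf{z}\ge 0$ of $B$ for its spectral radius $\rho(\partial H)$, which exists because $B$ is nonnegative. Forming the inner product of the entrywise inequality with $\mathbf{z}$ is legitimate since $\mathbf{z}\ge 0$, and using that $B$ is \emph{symmetric} I can transfer the powers of $B$ onto the eigenvector: $\langle \mathbf{z},B^{2}\mathbf{1}\rangle=\rho(\partial H)^{2}\langle\mathbf{z},\mathbf{1}\rangle$ and $\langle\mathbf{z},B\mathbf{1}\rangle=\rho(\partial H)\langle\mathbf{z},\mathbf{1}\rangle$. As $\langle\mathbf{z},\mathbf{1}\rangle>0$, dividing through collapses everything to the scalar quadratic inequality
\[\rho(\partial H)^{2}\le P\,\rho(\partial H)+(r-1)Q.\]

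Finally I would bring in Lemma \ref{spec-2shadow}, which gives $\rho\le\tfrac{1}{r-1}\rho(\partial H)$, i.e. $(r-1)\rho\le\rho(\partial H)$. Since $P,Q\ge 0$, the polynomial $t\mapsto t^{2}-Pt-(r-1)Q$ has a nonpositive smaller root, so the displayed inequality states precisely that $\rho(\partial H)$ lies at or below the larger root; because $0\le (r-1)\rho\le\rho(\partial H)$, the value $(r-1)\rho$ lies in the same interval, whence $\big((r-1)\rho\big)^{2}\le P(r-1)\rho+(r-1)Q$. Dividing by $(r-1)^{2}$ yields the asserted $\rho^{2}-\frac{P}{r-1}\rho-\frac{Q}{r-1}\le 0$. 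The one genuinely substantive point is the combinatorial identification of $w_{1}(u)$ and $w_{2}(u)$ with the row sums of $B$ and $B^{2}$; once the walk hypothesis is recast as a statement about $B$ and $B^{2}$, the remainder is a routine Perron--Frobenius and symmetry manipulation. The only place requiring care is the final monotonicity bookkeeping with the roots, which is where the sign conventions $P,Q\ge 0$ enter.
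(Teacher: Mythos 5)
Your proposal is correct, but it reaches the conclusion by a genuinely different route than the paper. The paper never passes to the $2$-shadow in this lemma: it takes a nonnegative eigenvector $\x$ of the adjacency tensor (via Theorem \ref{PFthm}), expands $\rho\left(\rho-\frac{P}{r-1}\right)\sum_{v}x_{v}^{r-1}$ using the eigenvalue equation, and applies AM-GM twice to replace each monomial $\x^{e\setminus\{v\}}$ by an average of $(r-1)$-st powers, at which point the iterated sums are exactly $\sum_{w}w_{2}(v,w)x_{w}^{r-1}$ and $\sum_{u}w_{1}(v,u)x_{u}^{r-1}$ and the walk hypothesis closes the estimate. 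You instead encode the hypothesis as the entrywise inequality $B^{2}\mathbf{1}\le P\,B\mathbf{1}+(r-1)Q\,\mathbf{1}$ for $B=A(\p H)$ — your identification $w_{k}(u)=(B^{k}\mathbf{1})_{u}$ for $k=1,2$ is right, including the multigraph multiplicity and the possibility $e_{1}=e_{2}$, since $w_{1}(u,u)=0$ correctly kills the diagonal — then pair with the matrix Perron vector to get $\rho(\p H)^{2}\le P\rho(\p H)+(r-1)Q$, and import Lemma \ref{spec-2shadow} plus a root-location argument. Your route is more transparent and generalizes immediately to longer walks (row sums of $B^{k}$ give degree-$k$ polynomial bounds), but it is not really more elementary: the AM-GM work you avoid is essentially what the proof of Lemma \ref{spec-2shadow} in \cite{SFKH2023} consists of, so the tensor-to-matrix loss of the factor $r-1$ is outsourced rather than eliminated, whereas the paper's argument is self-contained given Theorem \ref{PFthm}. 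One bookkeeping point: your final step needs $Q\ge 0$ (so the smaller root of $t^{2}-Pt-(r-1)Q$ is nonpositive and $(r-1)\rho$ is trapped in the root interval; in fact $Q\ge 0$ alone suffices, $P\ge 0$ is not needed there). The lemma's statement does not impose this sign condition, but the paper's own proof carries the same tacit assumption — multiplying the AM-GM bound by $\rho-\frac{P}{r-1}$ is only legitimate when that factor is nonnegative, the complementary case $\rho<\frac{P}{r-1}$ being trivial precisely when $Q\ge 0$ — and every application in the paper has $P,Q>0$, so this is a shared, benign caveat rather than a gap in your argument.
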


\begin{proof} Let $\x$ be a nonnegative eigenvector of $H$ corresponding to the spectral radius $\rho$.
Denote $E_{v}:=\left\{e\in E(H):v\in e\right\}$ for a vertex $v$ of $H$, and
$\x^S:=\prod_{v \in S}x_v$ for a subset $S \subseteq V(H)$.
By the eigenvector equation (\ref{ev}), for any $v\in V(H)$,
$\rho x_{v}^{r-1}=\sum_{e\in E_{v}}\textbf{x}^{e\setminus\left\{v\right\}}$.
 Hence,
 \[\begin{aligned}
 \left(\rho-\frac{P}{r-1}\right)\sum_{v\in V(H)}\rho x_{v}^{r-1}
&=\left(\rho-\frac{P}{r-1}\right)\sum_{v\in V(H)}\sum_{e\in E_{v}}\textbf{x}^{e\setminus\left\{v\right\}}\\
& \le\left(\rho-\frac{P}{r-1}\right)\sum_{v\in V(H)}\sum_{e\in E_{v}}\sum_{u\in e\setminus\left\{v\right\}}\frac{x_{u}^{r-1}}{r-1}\\
&=\frac{1}{r-1}\sum_{v\in V(H)}\sum_{e\in E_{v}}\sum_{u\in e\setminus\left\{v\right\}}\left(\rho-\frac{P}{r-1}\right)x_{u}^{r-1}\\
&=\frac{1}{r-1}\sum_{v\in V(H)}\sum_{e\in E_{v}}\sum_{u\in e\setminus\left\{v\right\}}\left(\sum_{e^{\prime}\in E_{u}}\textbf{x}^{e^{\prime}\setminus\left\{u\right\}}-\frac{P}{r-1}x_{u}^{r-1}\right)\\
&\le \frac{1}{r-1}\left(\frac{1}{r-1}\sum_{v\in V(H)}\sum_{e\in E_{v}}\sum_{u\in e\setminus\left\{v\right\}}\sum_{e^{\prime}\in E_{u}}\sum_{w\in e^{\prime}\setminus\left\{u\right\}}x_{w}^{r-1}\right.\\
& \hspace{2.5cm} \left.-\sum_{v\in V(H)}\sum_{e\in E_{v}}\sum_{u\in e\setminus\left\{v\right\}}\frac{P}{r-1}x_{u}^{r-1}\right),
\end{aligned}\]
where the 2nd and the 5th inequalities follow from the AM-GM inequality.
Clearly, for any given $v\in V(H)$,
\[\sum_{e\in E_{v}}\sum_{u\in e\setminus\left\{v\right\}}\sum_{e^{\prime}\in E_{u}}\sum_{w\in e^{\prime}\setminus\left\{u\right\}}x_{w}^{r-1}=\sum_{w\in V(H)}w_{2}(v,w)x_{w}^{r-1},\]
\[\sum_{e\in E_{v}}\sum_{u\in e\setminus\left\{v\right\}}x_{u}^{r-1}=\sum_{u\in V(H)}w_{1}(v,u)x_{u}^{r-1}.\]
Thus, by the assumption of the theorem,
\[\begin{aligned}
\rho\left(\rho-\frac{P}{r-1}\right)\sum_{v\in V(H)}x_{v}^{r-1}
&\le\frac{1}{(r-1)^{2}}\left(\sum_{v\in V(H)}\sum_{w\in V(H)}w_{2}(v,w)x_{w}^{r-1}-\sum_{v\in V(H)}\sum_{u\in V(H)}Pw_{1}(v,u)x_{u}^{r-1}\right)\\
&=\frac{1}{(r-1)^{2}}\left(\sum_{w\in V(H)}\sum_{v\in V(H)}w_{2}(v,w)x_{w}^{r-1}-\sum_{u\in V(H)}\sum_{v\in V(H)}Pw_{1}(v,u)x_{u}^{r-1}\right)\\
&=\frac{1}{(r-1)^{2}}\left(\sum_{w\in V(H)}w_{2}(w)x_{w}^{r-1}-\sum_{u\in V(H)}Pw_{1}(u)x_{u}^{r-1}\right)\\
&\le\frac{Q}{r-1}\sum_{v\in V(H)}x_{v}^{r-1},
\end{aligned}\]
which implies the desired inequality.
\end{proof}

We remark that the parameters $P,Q$ in Lemma \ref{spec-ineq-walk} always exist.
For example, taking $P=\Delta(H)(r-1)$ and $Q$ any positive number, we have
\[\begin{aligned} w_2(u) &=\sum_{(e,v): uev \in W_1(u,v)} d(v)(r-1) \\
& \le \sum_{(e,v): uev \in W_1(u,v)} \Delta(H)(r-1) =\Delta(H)(r-1)w_1(u) \\
&\le P w_1(u)+(r-1)Q,
\end{aligned}
\]
where $\Delta(H)$ is the maximum degree of $H$, and $W_1(u,v)$ is the set of $1$-walks of $H$ from $u$ to $v$.
In fact, we wish to find $P,Q$ as small as possible.

Let $u,v$ be two vertices of a hypergraph $H$.
The \emph{distance} between $u$ and $v$ (or from $u$ to $v$) is the minimum length of the walks between them.
Denote respectively by $N_H(u)$ and $N_H^{2}(u)$ the sets of vertices of $H$ whose distance from $u$ is exactly one and two.
We will use $N(u)$ and $N^2(u)$ if there is no confusion.

\begin{corollary}\label{spec-ineq-degree}
	Let $H$ be a linear $r$-uniform hypergraph with the spectral radius $\rho:=\rho(H)$. If $\sum_{u\in N_{H}(v)}d(u)\le Pd(v)+Q$ for any vertex $v\in V(H)$,
then \[\rho^{2}-\frac{P}{r-1}\rho-\frac{Q}{r-1}\le 0,\]
where $P$ and $Q$ are parameters independent of the choice of $v$.
\end{corollary}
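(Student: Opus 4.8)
The plan is to deduce this corollary directly from Lemma \ref{spec-ineq-walk} by rewriting the degree hypothesis as a walk hypothesis with the \emph{same} parameters $P$ and $Q$. The two quantities I need to control are $w_{1}(v)$ and $w_{2}(v)$.

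First I would compute $w_{1}(v)$. Since $H$ is $r$-uniform, every edge $e$ containing $v$ gives rise to exactly $r-1$ one-walks $v e u$ (one for each $u\in e\setminus\{v\}$), and these are distinct across different edges; hence $w_{1}(v)=(r-1)d(v)$. This step uses only $r$-uniformity.

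Next I would compute $w_{2}(v)$. A $2$-walk from $v$ has the form $v e_{1} u e_{2} w$, i.e.\ a $1$-walk $v e_{1} u$ followed by a $1$-walk $u e_{2} w$, and the number of the latter is $w_{1}(u)=(r-1)d(u)$. Therefore $w_{2}(v)=(r-1)\sum_{(e_{1},u)} d(u)$, where the sum runs over all $1$-walks $v e_{1} u$ from $v$. The key point---and the only place where linearity enters---is that in a linear hypergraph any two edges meet in at most one vertex, so the sets $e_{1}\setminus\{v\}$ over the edges $e_{1}\ni v$ are pairwise disjoint and their union is exactly $N_{H}(v)$; equivalently, each $u\in N_{H}(v)$ is joined to $v$ by a unique edge, hence by a unique $1$-walk. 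Consequently $\sum_{(e_{1},u)} d(u)=\sum_{u\in N_{H}(v)} d(u)$ and $w_{2}(v)=(r-1)\sum_{u\in N_{H}(v)}d(u)$.

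Finally I would substitute the hypothesis. From $\sum_{u\in N_{H}(v)}d(u)\le P d(v)+Q$ I get $w_{2}(v)\le (r-1)\bigl(Pd(v)+Q\bigr)=P\,(r-1)d(v)+(r-1)Q=P\,w_{1}(v)+(r-1)Q$, which is precisely the hypothesis of Lemma \ref{spec-ineq-walk} with the same $P$ and $Q$. Applying that lemma yields $\rho^{2}-\frac{P}{r-1}\rho-\frac{Q}{r-1}\le 0$, as desired. I do not anticipate a genuine obstacle here; the only subtlety worth flagging is the role of linearity: without it a neighbor $u$ could be reached from $v$ through several edges, so $w_{2}(v)$ would equal a \emph{walk-weighted} sum of degrees rather than $\sum_{u\in N_{H}(v)}d(u)$, and the clean identification above would fail.
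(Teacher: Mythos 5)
Your proof is correct and is essentially identical to the paper's own argument: both compute $w_{1}(v)=(r-1)d(v)$ and $w_{2}(v)=(r-1)\sum_{u\in N_{H}(v)}d(u)$ using linearity and then invoke Lemma \ref{spec-ineq-walk} with the same $P$ and $Q$. Your remark that linearity is needed only for the $w_{2}$ identification (while $w_{1}(v)=(r-1)d(v)$ follows from $r$-uniformity alone) is a correct refinement of the paper's slightly coarser phrasing.
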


\begin{proof}
As $H$ is linear, for any $v\in V(H)$, $w_{1}(v)=(r-1)d(v)$, and
 \[w_{2}(v)=\sum_{u\in N_{H}(v)}(r-1)d(u)\le (r-1)Pd(v)+(r-1)Q=Pw_{1}(v)+(r-1)Q.\]
 The result follows by Lemma \ref{spec-ineq-walk}.
\end{proof}

We conclude this section with a lemma that will be used later.

\begin{lemma}{\cite{Furedi1996CPC}}\label{comb-ineq}
Let $n,k\ge 1$ be integers and $c,x_{0},x_{1},\dots,x_{n}$ be reals.
If $\sum_{i=1}^{n}\binom{x_{i}}{k}\le c\binom{x_{0}}{k}$,
then
\[\sum_{i=1}^{n}x_{i}\le x_{0}c^{\frac{1}{k}}n^{1-\frac{1}{k}}+(k-1)n.\]
\end{lemma}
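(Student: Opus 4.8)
The plan is to combine the convexity of the generalized binomial $\binom{x}{k}=\frac{1}{k!}\prod_{j=0}^{k-1}(x-j)$ with Jensen's inequality and then to extract a power-mean estimate. The first step is a reduction to the case where every $x_i\ge k-1$ (and $x_0\ge k-1$). The point is that any index with $x_i<k-1$ contributes less than $k-1$ to $\sum_i x_i$, so all such indices together contribute at most $(k-1)n$, which is exactly the additive slack appearing on the right-hand side; after isolating them one may assume the remaining variables all exceed $k-1$. For nonnegative integer data, as in the intended application to vertex degrees, every such index has $\binom{x_i}{k}=0$, so discarding it leaves the hypothesis $\sum\binom{x_i}{k}\le c\binom{x_0}{k}$ intact; this is the cleanest incarnation of the reduction.

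Second, I would establish that $f(x):=\binom{x}{k}$ is convex on $[k-1,\infty)$. Writing $f=\frac1{k!}p$ with $p(x)=\prod_{j=0}^{k-1}(x-j)$, a direct computation gives $p''(x)=p(x)\big[(\sum_{j}\tfrac1{x-j})^2-\sum_{j}\tfrac1{(x-j)^2}\big]$, and for $x>k-1$ all factors $x-j$ are positive, so $p(x)>0$ while the bracket equals $2\sum_{i<j}\frac1{(x-i)(x-j)}>0$; hence $f''>0$ on $(k-1,\infty)$ and $f$ is convex on $[k-1,\infty)$. With convexity in hand, Jensen's inequality applied to the mean $\bar x:=\frac1n\sum_{i=1}^n x_i\ge k-1$ yields $\binom{\bar x}{k}\le\frac1n\sum_{i=1}^n\binom{x_i}{k}\le\frac{c}{n}\binom{x_0}{k}$.

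Finally I would convert this into the desired linear bound by squeezing $\binom{\cdot}{k}$ between powers. Since $\bar x\ge k-1$, every factor of $\prod_{j=0}^{k-1}(\bar x-j)$ is at least $\bar x-(k-1)\ge0$, giving $\binom{\bar x}{k}\ge\frac{1}{k!}(\bar x-(k-1))^k$; since $x_0\ge k-1\ge0$, every factor of $\prod_{j=0}^{k-1}(x_0-j)$ lies in $[0,x_0]$, giving $\binom{x_0}{k}\le\frac{1}{k!}x_0^k$. Substituting into the Jensen inequality and clearing $k!$ gives $(\bar x-(k-1))^k\le\frac{c}{n}x_0^k$; taking $k$-th roots yields $\bar x\le c^{1/k}n^{-1/k}x_0+(k-1)$, and multiplying by $n$ produces $\sum_{i=1}^n x_i=n\bar x\le x_0c^{1/k}n^{1-1/k}+(k-1)n$, as required.

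I expect the main obstacle to be the reduction step rather than the analytic core: for genuinely real (possibly negative or fractional) $x_i$ below $k-1$ the quantity $\binom{x_i}{k}$ can be negative, so deleting those indices need not preserve the hypothesis verbatim. The honest way around this is to observe that whenever such terms would weaken the constraint, the corresponding $x_i$ are themselves small (bounded above by $k-1$), so their total contribution to $\sum_i x_i$ is already absorbed by the $(k-1)n$ slack; making this bookkeeping precise is the only delicate part, and it becomes vacuous in the integer setting where the lemma is applied.
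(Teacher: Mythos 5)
The paper gives no proof of this lemma at all --- it is imported verbatim from F\"uredi \cite{Furedi1996CPC} --- so the comparison must be against F\"uredi's original argument, and yours is essentially that argument: convexity of $x\mapsto\binom{x}{k}$ on $[k-1,\infty)$, Jensen at the mean $\bar{x}=\frac{1}{n}\sum_{i=1}^n x_i$, and the two-sided squeeze $\frac{1}{k!}\left(\bar{x}-k+1\right)^{k}\le\binom{\bar{x}}{k}$ and $\binom{x_0}{k}\le\frac{1}{k!}x_0^{k}$, followed by taking $k$-th roots. Your analytic core is correct in every detail: the identity $p''=p\left[\left(\sum_j\frac{1}{x-j}\right)^{2}-\sum_j\frac{1}{(x-j)^{2}}\right]$ and its positivity for $x>k-1$, the direction of Jensen, the factor-by-factor estimates, and the final algebra producing $\bar{x}\le c^{1/k}n^{-1/k}x_0+(k-1)$ are all right, and this is exactly where the $(k-1)n$ term comes from.

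The one genuine soft spot is the one you flagged yourself, and it is worth being precise about why it cannot be dismissed as bookkeeping in the stated generality. First, you posit $x_0\ge k-1$ as part of the reduction, but no reduction can produce this: it is an implicit hypothesis missing from the statement. Indeed, as literally stated for arbitrary reals the lemma is false --- take $k=2$, $n=1$, $c=1$, $x_0=-10$, $x_1=10$; then $\binom{10}{2}=45\le 55=\binom{-10}{2}$, yet the conclusion would assert $10\le -10+1$. So one must read $x_0\ge k-1$ (or nonnegativity together with the convention that $\binom{x}{k}$ vanishes below $k$) into the statement, as F\"uredi's setting does. Second, your claim that discarded indices with $x_i<k-1$ are ``absorbed by the $(k-1)n$ slack'' does not capture the actual difficulty: for $k$ odd and $x_i\ll 0$ one has $\binom{x_i}{k}\approx -|x_i|^{k}/k!$, so deleting such an index inflates the effective constant from $c$ to $c'=c-\binom{x_i}{k}\big/\binom{x_0}{k}$, which can be arbitrarily larger than $c$; the resulting gain in the bound for the surviving indices scales like $x_0\bigl((c')^{1/k}-c^{1/k}\bigr)m^{1-1/k}$ and must be weighed against the (very negative) deficit $x_i$ itself, not against the additive slack. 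Verifying that this trade never wins requires a genuine optimization (a power-mean/H\"older-type argument), so the real-variable version is a real theorem, not a corollary of the integer one. That said, in the only regime the paper uses the lemma --- $x_i=d(v)$ nonnegative integers and $x_0=m\ge s=k$, as in Lemma \ref{edge-Kst-0} --- every $x_i\le k-1$ has $\binom{x_i}{k}=0$, deletion preserves the hypothesis exactly, and your proof is complete as written.
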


\section{Spectral radius of $\B_r(C_3)$-free, $\B_r(K_{2,t})$-free, or $\{\B_{r}(C_{3}),\B_{r}(K_{s,t})\}$-free linear hypergraphs}\label{section 3}

Let $H$ be a hypergraph and let $S,T\subseteq V(H)$, where $S\cap T=\emptyset$.
Denote by $H[S]$ the subhypergraph of $H$ whose vertex set is $S$ and whose edge set consists of all edges of $H$ contained in $S$.
Similarly, $H[S,T]$ is the subhypergraph of $H[S\cup T]$ whose edge set consists of all edges of $H$ intersecting both $S$ and $T$.

\subsection{Spectral radius of $\B_r(C_3)$-free linear hypergraphs}

By Lemma \ref{spec-2shadow}, the spectral version of the generalized $(6,3)$-problem can be obtained.
Recall that by \cite[Theorem 3.8]{CD2012}, for an $r$-uniform hypergraph $H$ on $n$ vertices,
\begin{equation}\label{AveD}\rho(H) \ge \frac{r e(H)}{n},
\end{equation}
where the right side of the inequality is the average degree of $H$.

\begin{theorem}\label{spec-63-prob}
For any $\varepsilon>0$, when $n$ is large enough,
\begin{equation}\label{spec-63-ineq}
\begin{aligned}
n^{1-\varepsilon}<\spex_{r}^{\text{\rm lin}}(n,\B_{r}(C_{3}))=o(n).
\end{aligned}
\end{equation}
\end{theorem}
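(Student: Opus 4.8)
The plan is to derive the two bounds in \eqref{spec-63-ineq} from the corresponding edge-count estimates in the generalized $(6,3)$-problem \eqref{6-3prob}, using the two inequalities relating spectral radius, edges, and the $2$-shadow that are already available in the excerpt.

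For the upper bound $\spex_{r}^{\mathrm{lin}}(n,\B_r(C_3))=o(n)$, first I would pass to the $2$-shadow via Lemma \ref{spec-2shadow}, which gives $\rho(H)\le\frac{1}{r-1}\rho(\partial H)$ for any $\B_r(C_3)$-free linear $r$-uniform hypergraph $H$. The point of working with $\partial H$ is that $\rho(\partial H)$ is the spectral radius of an ordinary (multi)graph, so I can bound it by a simple graph-theoretic quantity. Since $H$ is linear, every pair of vertices lies in at most one edge of $H$, so $\partial H$ is in fact a \emph{simple} graph (each $2$-subset has multiplicity at most $1$), and its number of edges is exactly $\binom{r}{2}e(H)$. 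Now I would invoke the standard bound $\rho(\partial H)\le\sqrt{2\,e(\partial H)}=\sqrt{r(r-1)\,e(H)}$ for a simple graph (this follows, e.g., from the fact that $\rho(G)^2$ is at most the largest eigenvalue of $A(G)^2$, whose trace is $2e(G)$, or from $\rho(G)\le\sqrt{2e(G)-n+1}$). Combining, $\rho(H)\le\frac{1}{r-1}\sqrt{r(r-1)e(H)}=\sqrt{\frac{r}{r-1}\,e(H)}$. By \eqref{6-3prob} we have $e(H)\le\ex_r^{\mathrm{lin}}(n,\B_r(C_3))=o(n^2)$, hence $\rho(H)\le\sqrt{\frac{r}{r-1}\,o(n^2)}=o(n)$, which is the desired upper bound.

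For the lower bound $n^{1-\varepsilon}<\spex_{r}^{\mathrm{lin}}(n,\B_r(C_3))$, I would use the extremal construction realizing the left half of \eqref{6-3prob}. By \eqref{6-3prob}, for every $\epsilon'>0$ and $n$ large there is a $\B_r(C_3)$-free linear $r$-uniform hypergraph $H_0$ on $n$ vertices with $e(H_0)>n^{2-\epsilon'}$. Applying the average-degree bound \eqref{AveD}, namely $\rho(H_0)\ge\frac{r\,e(H_0)}{n}$, yields $\rho(H_0)>\frac{r}{n}\,n^{2-\epsilon'}=r\,n^{1-\epsilon'}>n^{1-\varepsilon}$ once $\epsilon'$ is chosen smaller than $\varepsilon$ and $n$ is large enough to absorb the constant factor $r$. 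Since $\spex_r^{\mathrm{lin}}(n,\B_r(C_3))\ge\rho(H_0)$ by definition, the lower bound follows.

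The main obstacle, and the place where care is needed, is the upper-bound direction: I must make sure the graph-spectral estimate I apply to $\partial H$ is genuinely of order $\sqrt{e(H)}$ rather than something linear in $n$, so that the subquadratic edge bound $o(n^2)$ translates into the sublinear spectral bound $o(n)$. Using $\rho(\partial H)\le\sqrt{2e(\partial H)}$ is exactly what achieves this, and linearity of $H$ is essential here because it guarantees $\partial H$ is simple with $e(\partial H)=\binom{r}{2}e(H)$; without linearity the multiplicities in $\partial H$ could inflate the spectral radius. I expect no difficulty in the lower bound beyond bookkeeping the $\varepsilon$ versus $\epsilon'$ and the constant $r$. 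One should also confirm that the extremal hypergraph witnessing \eqref{6-3prob} is connected (or pass to its densest component) if one wants to invoke the equality case of Lemma \ref{spec-2shadow}, but for the inequalities above connectivity is not needed.
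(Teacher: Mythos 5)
Your proposal is correct, and the lower-bound half is identical to the paper's (construction from Eq.~(\ref{6-3prob}) plus the average-degree bound (\ref{AveD})). For the upper bound, however, you take a genuinely different and more elementary route. The paper argues by contradiction: assuming $\rho(\partial H)\ge \xi n$ along a sequence, it takes a Perron eigenvector of $\partial H$ normalized so its maximum entry is $1$, expands $\rho^2$ over $2$-walks from the maximal vertex $u$, and uses the structural fact that linearity plus $\B_r(C_3)$-freeness force $\partial H[N_{\partial H}(u)]$ to be a disjoint union of $d_u(H)$ cliques of size $r-1$; this yields $e(\partial H)\ge \rho^2-\frac{r-2}{2}\rho$, contradicting $e(H)=o(n^2)$. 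You instead apply the classical trace bound $\rho(\partial H)\le\sqrt{2e(\partial H)}$ directly, with the $\B_r(C_3)$-free hypothesis entering only through the edge count (\ref{6-3prob}); your observation that linearity makes $\partial H$ simple with $e(\partial H)=\binom{r}{2}e(H)$ exactly is the correct justification, and your resulting bound $\rho(H)\le\sqrt{\frac{r}{r-1}\,e(H)}$ gives $o(n)$ just as well. What the paper's local eigenvector analysis buys is only a sharper constant in the relation between $\rho$ and $e(\partial H)$ (essentially $\rho^2\le e(\partial H)+\frac{r-2}{2}\rho$ versus your $\rho^2\le 2\,e(\partial H)$), which is immaterial here since the input $o(n^2)$ carries no explicit constant; your direct argument avoids both the contradiction framework and the neighborhood structure lemma. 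Your closing remarks are also accurate: connectivity is needed only for the equality case of Lemma~\ref{spec-2shadow}, not for the inequality you use.
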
	

\begin{proof}
By Eq. (\ref{6-3prob}), there exists a $\B_{r}(C_{3})$-free linear $r$-uniform hypergraph $H$ on $n$ vertices with more than $n^{2-\epsilon}$ edges.
So, by Eq. (\ref{AveD}),
\[
\spex^{\text{lin}}_{r}(n,\B_{r}(C_{3}))\ge
\rho(H) \ge \frac{r e(H)}{n}>rn^{1-\epsilon},
\]
which implies the lower bound in Eq. (\ref{spec-63-ineq}).

Next, we will prove the right equality.
Assume to the contrary that
$\spex^{\text{\rm lin}}_{r}(n,\B_{r}(C_{3}))\ne o(n)$.
Then there exist a real number $ \xi>0 $ and a sequence of $\B_{r}(C_{3})$-free linear $r$-uniform hypergraphs $H_{n_k}$ on $n_k$ vertices such that $\rho(H_{n_k})\ge \xi n_k$.
By Lemma \ref{spec-2shadow}, we have $\rho(\partial H_{n_k})\ge(r-1)\xi n_k$.
Without loss of generality, we assume that $\{H_n\}$ is a sequence of $\B_{r}(C_{3})$-free linear $r$-uniform hypergraphs on $n$ vertices such that $\rho(\partial H_{n})\ge\xi n$.
In the following, we simply write $H$ for $H_n$.

 Let $\x$ be a nonnegative eigenvector of $\p H$ corresponding to $\rho:=\rho(\partial H)$ with a certain vertex $u$ satisfying
 \[x_{u}=\max\left\{x_{v}:v\in V(\partial H)\right\}=1.\]
 Since $H$ is linear, $\p H$ is a simple graph.
 By eigenvector equation, we have
  \[\rho=\rho x_{u}=\sum_{v\in N_{\p H}(u)}x_{v} \le n-1.\]
  Therefore,
  \[\begin{aligned} \rho^{2}&=\rho^{2}x_{u}=\rho\sum_{v\in N_{\p H}(u)}x_{v}=\sum_{v\in N_{\p H}(u)}\sum_{w\in N_{\p H}(v)}x_{w}\\
  &=\sum_{v\in N_{\p H}(u)}\left(\sum_{w\in N_{\p H}(v)\cap\left\{N_{\p H}(u)\cup\left\{u\right\}\right\}}x_{w}
  +\sum_{w\in N_{\p H}(v)\setminus\left\{N_{\p H}(u)\cup\left\{u\right\}\right\}}x_{w}\right).
  \end{aligned}\]
Observe that
\[\begin{aligned}\sum_{v\in N_{\p H}(u)}\sum_{w\in N_{\p H}(v)\cap\left\{N_{\p H}(u)\cup\left\{u\right\}\right\}}x_{w}
=\sum_{\left \{v,w\right\}\in E(\p H[N_{\p H}(u)])}(x_{v}+x_{w})+\sum_{v\in N_{\p H}(u)}x_{u}. \end{aligned}\]
Moreover,
\[\begin{aligned}
&\frac{1}{2}\sum_{\left\{v,w\right\}\in E(\p H[N_{\p H}(u)])}(x_{v}+x_{w})+\sum_{v\in N_{\p H}(u)}x_{u}+\sum_{v\in N_{\p H}(u)}\sum_{w\in N_{\p H}(v)\setminus\left\{N_{\p H}(u)\cup\left\{u\right\}\right\}}x_{w}\\
&\le |E(\p H[N_{\p H}(u)])|+|N_{\p H}(u)|+|E(\p H[N_{\p H}(u),N_{\p H}^{2}(u)])|\\
&\le |E(\p H)|.
\end{aligned}\]
Since $H$ is $\B_{r}(C_{3})$-free and linear, $\p H[N_{\p H}(u)]$ is the graph consisting of $d_u(H)$ disjoint cliques of size $r-1$. So
\[\begin{aligned}
\frac{1}{2}\sum_{\left\{v,w\right\}\in E(\p H[N_{\p H}(u)])}(x_{v}+x_{w})
&=\frac{1}{2}\sum_{v\in N_{\p H}(u)}d_{\p H[N_{\p H}(u)]}(v) ~ x_{v}\\
&=\frac{r-2}{2}\sum_{v\in N_{\p H}(u)}x_{v}\\&=\frac{r-2}{2}\rho.
\end{aligned}\]
Therefore, by the above discussion and the fact $\rho < n$, we have
\[\begin{aligned} e(\p H) & \ge \rho^{2}-\frac{1}{2}\sum_{\left\{v,w\right\}\in E(\p H[N_{\p H}(u)])}(x_{v}+x_{w})\\
&= \rho^{2}-\frac{r-2}{2}\rho\\
& \ge\xi^{2}n^{2}-\frac{(r-2) n}{2}
\end{aligned}\] for $n$ sufficiently large.
So,
$e(H)=e(\p H)/\binom{r}{2} \ne o(n^2)$; a contradiction to Eq. (\ref{6-3prob}).
\end{proof}

\subsection{Spectral radius of $\B_r(K_{2,t})$-free linear hypergraphs}
Ergemlidze, Gy\H{o}ri and Methuku \cite{EGM2019} presented an upper bound of $\ex_{3}^{\text{lin}}(n,B_{r}(C_{4}))$ by the following relation on degrees.
Let $H$ be an $n$-vertex $\B_{3}(C_{4})$-free linear $3$-uniform hypergraph.
Then, for any $v\in V(H)$, \[\sum_{u\in N_{H}(v)}d(u)\le 6d(v)+\frac{n}{2}.\]
Note that $C_4 \cong K_{2,2}$.
While considering $\ex_{r}^{\text{lin}}(n,\B_{r}(K_{2,t}))$, Gerbner, Methuku and Vizer \cite{GMV2019} found the following relation.
	
\begin{lemma}\cite{GMV2019}\label{K2t-degree}
Let $H$ be a $\B_{r}(K_{2,t})$-free linear $r$-uniform hypergraph.
Then, for any $v\in V(H)$,
\[\sum_{u\in N_{H}(v)}d(u)\le(2r^{2}-4r+1)td(v)+\frac{(t-1)n}{r-1}.\]
\end{lemma}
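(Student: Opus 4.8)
The plan is to translate the degree sum into a count of $2$-walks and then control it through common neighbourhoods, invoking the Berge-$K_{2,t}$-freeness exactly where two vertices accumulate too many shared neighbours. Since $H$ is linear, as in Corollary~\ref{spec-ineq-degree} we have $w_{2}(v)=\sum_{u\in N(v)}(r-1)d(u)$, so it suffices to bound $w_{2}(v)=\sum_{w\in V(H)}w_{2}(v,w)$. Again by linearity, for $w\ne v$ the number $w_{2}(v,w)$ equals the number of common neighbours of $v$ and $w$ (the two legs of each $2$-walk are the forced edges through $v,c$ and through $c,w$), while the returning walks give $w_{2}(v,v)=(r-1)d(v)$, contributing only $d(v)$ after dividing by $r-1$. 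Thus everything reduces to bounding $\sum_{w\ne v}c(v,w)$, where $c(v,w)$ is the common-neighbour count, split according to the distance from $v$ to $w$.

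First I would isolate the \emph{spread} contribution from vertices $w$ at distance exactly $2$ from $v$, so that $v$ and $w$ share no edge. Call a common neighbour $c$ of such a pair \emph{lonely} if neither the edge through $v,c$ nor the edge through $w,c$ carries a second common neighbour of $v$ and $w$. Lonely common neighbours lie in pairwise distinct edges at $v$ and in pairwise distinct edges at $w$, and since $v$ and $w$ share no edge, the edge through $v,c$ can never coincide with the edge through $w,c'$. Hence $t$ lonely common neighbours would realise a $\B_r(K_{2,t})$ with parts $\{v,w\}$ and the $t$ centres, which is excluded. Therefore each distance-$2$ vertex $w$ has at most $t-1$ lonely common neighbours, and summing over the at most $n$ such $w$ and dividing by $r-1$ produces exactly the term $\frac{(t-1)n}{r-1}$.

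What remains are the \emph{local} contributions: the walks ending in $N(v)$ together with the non-lonely (clustered) common neighbours of the distance-$2$ pairs. Each clustered common neighbour is witnessed by an edge, through $v$ or meeting one of $v$'s edges, that carries at least two common neighbours of the relevant pair. I would charge every such witness to a pair $\{u_{i},u_{j}\}$ of vertices lying in a common edge of $H$ at or adjacent to $v$, and bound the shared neighbourhood of that pair again by Berge-$K_{2,t}$-freeness: the lonely common neighbours of $\{u_{i},u_{j}\}$ outside their common edge number at most $t-1$, cross-distinctness being automatic because that common edge is, by linearity, the unique edge through both $u_{i}$ and $u_{j}$. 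Since each edge has only $r$ vertices, every witnessing edge is counted $O(r)$ times and each of the at most $(r-1)d(v)$ neighbours of $v$ spawns $O(r)$ relevant pairs, so the local total is of order $r^{2}t\,d(v)$; tracking the constants carefully should yield the coefficient $2r^{2}-4r+1=2(r-1)^{2}-1$.

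The main obstacle is precisely this last bookkeeping. Peeling off the lonely part of a clustered contribution leaves a smaller clustered remainder, so a naive pair-by-pair count only gives an $O(d(v)^{2})$ bound; note that a pair $\{c,c'\}$ of neighbours of $v$ sitting in a common edge avoiding $v$ forms a Berge-$C_{3}$ with $v$, and such triangles are freely allowed in a $\B_r(K_{2,t})$-free hypergraph, which is exactly why triangle-based double counting fails here. The resolution is that all clustering is confined inside edges of size $r$, so the multiplicities stay bounded by $r$ and the recursion closes after one step. Extracting the sharp constant $2r^{2}-4r+1$, rather than a crude polynomial in $r$, will require separating the returning walks, the walks into $N(v)$, and the clustered distance-$2$ walks, and accounting for the overlaps among these three families exactly once.
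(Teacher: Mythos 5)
The paper does not prove this lemma at all---it is imported verbatim from \cite{GMV2019}---so your attempt has to be judged against the statement itself. The global half of your argument is correct and complete: by linearity $\sum_{u\in N_H(v)}d(u)=\frac{1}{r-1}w_2(v)$, the returning walks contribute $w_2(v,v)=(r-1)d(v)$, and for $w\ne v$ the count $w_2(v,w)$ is exactly the number of common neighbours of $v$ and $w$; your lonely-neighbour argument for $w$ at distance two is sound (distinct lonely centres force pairwise distinct edges at $v$ and at $w$, and a cross-coincidence $e_{vc}=e_{wc'}$ would be an edge containing both $v$ and $w$), so $t$ lonely common neighbours would indeed realise a Berge-$K_{2,t}$, and summing $t-1$ over at most $n$ vertices yields precisely the term $\frac{(t-1)n}{r-1}$.

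The local half, however, has a genuine gap, and it sits exactly where you flag it. The only quantitative tool you state for a pair $\{u_i,u_j\}$ is that its \emph{lonely} common neighbours number at most $t-1$; but the vertices $w$ you charge to a pair $\{c,c'\}$ lying in a common edge are \emph{all} common neighbours of that pair, lonely or not, so this bound does not apply to them, the ``smaller clustered remainder'' you mention reappears at the next level, and your claim that ``the recursion closes after one step'' is asserted rather than proved---as is the coefficient $2r^2-4r+1$, which is only promised. What is missing is a bound on the \emph{entire} common neighbourhood of a pair $\{a,b\}$: form the bipartite multigraph whose sides are the hyperedges through $a$ and through $b$, with one link per common neighbour $x\notin e_{ab}$ joining $e_{ax}$ to $e_{bx}$; a matching of size $t$ gives a Berge-$K_{2,t}$ (no edge other than $e_{ab}$ contains both $a$ and $b$, so cross-equalities are impossible), hence by K\H{o}nig's theorem at most $t-1$ hyperedges cover all such $x$, each containing at most $r-1$ of them, giving at most $(t-1)(r-1)$ common neighbours outside a shared edge plus at most $r-2$ inside it. (A greedy maximal-family version of this closes your ``one-step'' recursion too, but only with the weaker constant $(t-1)(2r-3)$ outside, which for large $r$ overshoots $2r^2-4r+1$; the K\H{o}nig-sharpened bound is what the stated coefficient requires.) With this per-pair estimate your charging scheme does close: the $v$-side clusters are charged to the at most $d(v)(r-1)(r-2)$ ordered pairs inside edges at $v$, each receiving at most $(t-1)(r-1)$ vertices $w$; the $w$-side clusters are charged to pairs of $N(v)$ adjacent via an edge missing $v$, whose number is controlled by applying the same estimate to the pairs $(v,c)$, with at most $2(r-2)$ incidences per pair since the witnessing $w$ must lie in the unique edge through $c,c'$; and the walks ending in $N(v)$ are at most $(r-1)d(v)\left((t-1)(r-1)+r-2\right)$. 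Altogether this yields a degree coefficient $(r-1)\left(1+(t-1)(2r-3)\right)\le(2r^2-4r+1)t$, so your architecture is the right one and can even recover the stated constant---but as written, the decisive per-pair estimate and the final count are absent.
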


Before proving the main result, we need the following basic fact.

\begin{fact}\label{root}
{\rm (1)} If $x^2-px-q \le 0$ with $p>0$, $q>0$ and $\sqrt{q} \ge \frac{p}{2}$, then
$ x \le q^\frac{1}{2}+p^\frac{1}{2}q^\frac{1}{4}.$

{\rm (2)} If $x^{2}-px-q\le 0$ with $p>0$ and $q\ge 0$, then $x\le p+\frac{q}{p}$.
\end{fact}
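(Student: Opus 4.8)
The plan is to treat both parts uniformly by comparing $x$ with the larger root of the quadratic $g(t):=t^{2}-pt-q$. Since $p>0$ and $q\ge 0$, the discriminant $p^{2}+4q\ge 0$, so $g$ has two real roots $r_{-}\le r_{+}$ with $r_{+}=\frac{p+\sqrt{p^{2}+4q}}{2}$, and the hypothesis $g(x)\le 0$ forces $r_{-}\le x\le r_{+}$; in particular $x\le r_{+}$. Thus in each case it suffices to verify that the claimed bound, call it $a$, dominates $r_{+}$. Because $g$ is strictly increasing on $[p/2,\infty)$ and $r_{+}\ge p/2$, the inequality $a\ge r_{+}$ will follow once I check the two conditions $a\ge p/2$ and $g(a)\ge 0$.

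For part (1) I would set $a=q^{1/2}+p^{1/2}q^{1/4}$ and expand directly:
\[
g(a)=a^{2}-pa-q=p^{1/2}q^{1/4}\bigl(2q^{1/2}-p\bigr).
\]
The hypothesis $\sqrt{q}\ge p/2$ gives $2q^{1/2}-p\ge 0$, hence $g(a)\ge 0$; moreover $a\ge q^{1/2}\ge p/2$, so $a\ge r_{+}\ge x$, which is exactly the asserted bound.

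For part (2) I would set $b=p+q/p$ and compute
\[
g(b)=b^{2}-pb-q=\frac{q^{2}}{p^{2}}\ge 0,
\]
while trivially $b\ge p\ge p/2$; again this yields $b\ge r_{+}\ge x$, as claimed.

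This is an elementary computation with no serious obstacle; the only point requiring care is the monotonicity bookkeeping — one must compare $a$ (respectively $b$) against the \emph{larger} root $r_{+}$ rather than the smaller one, and this is precisely what the auxiliary condition $a\ge p/2$ guarantees via the fact that $g$ increases past its vertex at $t=p/2$. If $x\le 0$ both conclusions hold trivially, since the right-hand sides $q^{1/2}+p^{1/2}q^{1/4}$ and $p+q/p$ are positive, so the comparison with $r_{+}$ is only needed in the regime $x>0$ that actually arises in the applications.
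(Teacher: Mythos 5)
Your proposal is correct: the identities $g\bigl(q^{1/2}+p^{1/2}q^{1/4}\bigr)=p^{1/2}q^{1/4}\bigl(2q^{1/2}-p\bigr)\ge 0$ and $g\bigl(p+\tfrac{q}{p}\bigr)=\tfrac{q^{2}}{p^{2}}\ge 0$ both check out, and comparing each candidate bound with the larger root $r_{+}$ via the monotonicity of $g$ on $[p/2,\infty)$ (together with the verification that each candidate exceeds $p/2$) is a complete argument. The paper states Fact \ref{root} without proof, and your verification is exactly the standard one implicit there --- namely that $x\le r_{+}=\frac{p+\sqrt{p^{2}+4q}}{2}$ and that each stated bound dominates $r_{+}$ --- so there is nothing further to add; the closing remark about $x\le 0$ is harmless but redundant, since $g(x)\le 0$ already confines $x$ to $[r_{-},r_{+}]$.
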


\begin{theorem}\label{spec-K2t}
For $t \ge 2$, if $n \ge \frac{(2r^2-4r+1)^2}{4(t-2)}$, then
\[ \spex_{r}^{\text{lin}}(n,\B_{r}(K_{2,t}))
\le \frac{(t-1)^{\frac{1}{2} }}{r-1}n^{\frac{1}{2}}
+\frac{(2r^{2}-4r+1)^{\frac{1}{2} }(t-1)^{\frac{1}{4} }t^{\frac{1}{2} }}{r-1}n^{\frac{1}{4} }.
\]
\end{theorem}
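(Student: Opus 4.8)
The plan is to bound $\rho:=\rho(H)$ for an arbitrary $\B_r(K_{2,t})$-free linear $r$-uniform hypergraph $H$ on $n$ vertices, uniformly in $H$; since $\spex_r^{\text{lin}}(n,\B_r(K_{2,t}))$ is the maximum of $\rho(H)$ over all such $H$, any such uniform bound passes to the spectral extremal value. The three ingredients already in place line up to do exactly this: Lemma \ref{K2t-degree} controls the neighborhood degree sums, Corollary \ref{spec-ineq-degree} converts that control into a quadratic inequality for $\rho$, and Fact \ref{root}(1) solves the quadratic in the precise closed form appearing on the right-hand side.

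First I would feed the bound of Lemma \ref{K2t-degree},
\[\sum_{u\in N_H(v)}d(u)\le (2r^2-4r+1)t\,d(v)+\frac{(t-1)n}{r-1},\]
valid for every $v\in V(H)$, into Corollary \ref{spec-ineq-degree} (applicable since $H$ is linear) with $P=(2r^2-4r+1)t$ and $Q=\frac{(t-1)n}{r-1}$. This yields $\rho^2-p\rho-q\le 0$, where
\[p=\frac{P}{r-1}=\frac{(2r^2-4r+1)t}{r-1},\qquad q=\frac{Q}{r-1}=\frac{(t-1)n}{(r-1)^2}.\]
Both $p$ and $q$ are positive, so $\rho$ is at most the larger root of this quadratic.

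It remains to read off the stated estimate, which is where Fact \ref{root}(1) enters: it gives $\rho\le q^{1/2}+p^{1/2}q^{1/4}$ once its hypothesis $\sqrt q\ge p/2$ is verified. A direct substitution shows $q^{1/2}=\frac{(t-1)^{1/2}}{r-1}n^{1/2}$ and $p^{1/2}q^{1/4}=\frac{(2r^2-4r+1)^{1/2}(t-1)^{1/4}t^{1/2}}{r-1}n^{1/4}$, which is exactly the claimed bound. The only substantive step — the one I expect to be the crux — is checking $\sqrt q\ge p/2$: after squaring, this becomes a lower bound on $n$ of the shape $4(t-1)n\ge (2r^2-4r+1)^2t^2$, i.e. a threshold quadratic in $2r^2-4r+1$, and the hypothesis imposed on $n$ in the statement is precisely what is designed to secure it. Everything else is routine substitution, since the combinatorial weight of the argument is carried entirely by Lemma \ref{K2t-degree}.
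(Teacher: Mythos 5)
Your proposal follows the paper's proof step for step: the paper likewise feeds Lemma \ref{K2t-degree} into Corollary \ref{spec-ineq-degree} with $P=(2r^{2}-4r+1)t$ and $Q=\frac{(t-1)n}{r-1}$, obtains $\rho^{2}-\frac{(2r^{2}-4r+1)t}{r-1}\rho-\frac{(t-1)n}{(r-1)^{2}}\le 0$, and then invokes Fact \ref{root}(1) to read off the stated bound. In terms of approach there is nothing that distinguishes the two.

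There is, however, a genuine gap at exactly the step you identify as the crux and then wave through. With $p=\frac{(2r^{2}-4r+1)t}{r-1}$ and $q=\frac{(t-1)n}{(r-1)^{2}}$, the hypothesis $\sqrt{q}\ge p/2$ of Fact \ref{root}(1) unwinds, as you correctly compute, to $4(t-1)n\ge(2r^{2}-4r+1)^{2}t^{2}$, i.e. $n\ge\frac{(2r^{2}-4r+1)^{2}t^{2}}{4(t-1)}$. Your closing assertion that the theorem's stated hypothesis $n\ge\frac{(2r^{2}-4r+1)^{2}}{4(t-2)}$ ``is precisely what is designed to secure it'' is false as written: for $t\ge 3$ the stated threshold is \emph{smaller} than the needed one by the factor $\frac{t^{2}(t-2)}{t-1}>1$, and for $t=2$ it is not even well defined (division by zero). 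Concretely, with $r=t=3$ one has $2r^{2}-4r+1=7$, the stated threshold is $49/4=12.25$, but the needed one is $49\cdot 9/8=55.125$; at $n=13$ the stated hypothesis holds while $\sqrt{q}=\sqrt{26}/2\approx 2.55 < 5.25 = p/2$, so Fact \ref{root}(1) is not applicable and the quadratic inequality alone does not deliver the claimed bound there. To be fair, the paper's own proof commits the identical omission --- it asserts that the stated hypothesis licenses Fact \ref{root}(1) without checking $\sqrt{q}\ge p/2$ --- so you have reproduced the published argument faithfully, gap included. A complete proof would either strengthen the hypothesis to $n\ge\frac{(2r^{2}-4r+1)^{2}t^{2}}{4(t-1)}$ or supply a separate argument covering the intermediate range of $n$; had you actually carried out the verification you flagged, you would have caught this.
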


\begin{proof}
For an $n$-vertex $\B_{r}(K_{2,t})$-free linear $r$-uniform hypergraph $H$ with $ \rho:=\rho(H)$,
by Lemma \ref{K2t-degree},
taking $P=(2r^{2}-4r+1)t$ and $Q=\frac{(t-1)n}{r-1}$ in Corollary \ref{spec-ineq-degree},
we have
\[\rho^{2}-\frac{(2r^{2}-4r+1)t}{r-1}\rho-\frac{(t-1)n}{(r-1)^2}\le 0.\]
So, if $n \ge \frac{(2r^2-4r+1)^2}{4(t-2)}$, we have
\[\rho\le\frac{(t-1)^{\frac{1}{2} }}{r-1}n^{\frac{1}{2}}+\frac{(2r^{2}-4r+1)^{\frac{1}{2} }(t-1)^{\frac{1}{4} }t^{\frac{1}{2} }}{r-1}n^{\frac{1}{4} },\]
by using Fact \ref{root}(1).
\end{proof}

\subsection{Spectral radius of $\{\B_r(K_{s,t}),\B_r(C_3)\}$-free linear hypergraphs}
	
An $r$-uniform hypergraph $H$ is called \emph{hm-bipartite} if its vertex set has a bipartition (called \emph{hm-bipartition}) $V(H)=V_{1}\cup V_{2}$
such that each edge of $H$ intersects $V_{1}$ into exactly one vertex and $V_2$ the other $r-1$ vertices ~\cite{QSH2015,HQ2014}.
In the above bipartition, $V_1$ is called the \emph{head part} and $V_2$ is called the \emph{mass part}; and if further $\left|V_{1}\right|=m$, $\left|V_{2}\right|=n$, then the above $H$ is called \emph{$(m,n)$-hm-bipartite}.
An $(m,n)$-hm-bipartite linear $r$-uniform hypergraph $H$ is called \emph{exact $\B_{r}(K_{s,t})$-free} if $H$ contains no subhypergraphs isomorphic to any hypergraph in $\B_{r}(K_{s,t})$ with the part of $s$ vertices of $K_{s,t}$ in the head part of $H$ and the remaining part of $t$ vertices  in the mass part of $H$.

\begin{lemma}\label{edge-Kst-0}
Let $H$ be an $(m,n)$-hm-bipartite exact $\B_{r}(K_{s,t})$-free linear $r$-uniform hypergraph with an hm-bipartition $V(H)=V_{1}\cup V_{2}$ satisfying $|V_{1}|=m, |V_{2}|=n$, where $t\ge 2$, $ s\ge 2 $.
Suppose further that $H$ is $\B_{r}(C_{3})$-free.
 Then
 \begin{equation}\label{edge-Kst-0-ineq}
e(H)\le\frac{(t-1)^{\frac{1}{s}}}{r-1}mn^{1-\frac{1}{s}}+\frac{s-1}{r-1}n.
\end{equation}
\end{lemma}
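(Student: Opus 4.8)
The plan is to reduce the edge count to a Kővári–Sós–Turán style double count governed by Füredi's inequality (Lemma \ref{comb-ineq}), after translating the forbidden exact Berge-$K_{s,t}$ into a bound on common neighbourhoods. First I record the relevant incidences. Since $H$ is hm-bipartite, every edge meets $V_1$ in a single \emph{head} and $V_2$ in $r-1$ \emph{mass} vertices; for $a\in V_1$ and $b\in V_2$ write $a\sim b$ if some edge contains both. Linearity makes such an edge unique, and it also forces, for each fixed $b\in V_2$, the edges through $b$ to have pairwise distinct heads, so that the number of head vertices adjacent to $b$ equals $d(b)$. Counting incident pairs edge-by-edge gives $\sum_{b\in V_2}d(b)=(r-1)e(H)$, so it suffices to bound $\sum_{b\in V_2}d(b)$.

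The engine will be the inequality
\[
\sum_{b\in V_2}\binom{d(b)}{s}\le (t-1)\binom{m}{s},
\]
obtained by double counting the pairs $(S,b)$ with $S\in\binom{V_1}{s}$ and $b$ a common neighbour of every vertex of $S$: the left side groups these pairs by $b$, while the right side follows once I show that every $s$-subset of $V_1$ has at most $t-1$ common neighbours in $V_2$. Feeding this into Lemma \ref{comb-ineq} with $k=s$, $x_0=m$, $c=t-1$, and the $n=|V_2|$ values $d(b)$ yields $\sum_{b\in V_2}d(b)\le m(t-1)^{1/s}n^{1-1/s}+(s-1)n$, which is exactly the claimed bound after dividing by $r-1$.

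The crux — and the only place the $\B_r(C_3)$-freeness is used — is the claim that no $s$ head vertices $a_1,\dots,a_s$ can have $t$ common neighbours $b_1,\dots,b_t$. The difficulty is that a common neighbourhood in the relation $\sim$ does not by itself produce a genuine Berge-$K_{s,t}$: to realise one I need the $st$ witnessing edges $e(a_i,b_j)$ to be \emph{distinct}, and while the hm-bipartition automatically separates edges with different heads, two mass vertices $b_j,b_{j'}$ could still lie in a single edge through some $a_i$. I will rule this out with the sub-claim: if mass vertices $b,b'$ share an edge $e$ of head $a$, then no other head $a'\neq a$ satisfies both $a'\sim b$ and $a'\sim b'$. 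Indeed, if the witnessing edges $f\ni a',b$ and $g\ni a',b'$ coincided, then $e$ and $f$ would share the two mass vertices $b,b'$, violating linearity; and if $f\neq g$, then $e,f,g$ are three distinct edges realising a Berge triangle on $\{a',b,b'\}$, contradicting $\B_r(C_3)$-freeness. Since $s\ge 2$, for any offending pair $b_j,b_{j'}$ I may choose a head $a_{i'}\neq a_i$, which is adjacent to both, and the sub-claim gives a contradiction; hence all $e(a_i,b_j)$ are distinct and form an exact Berge-$K_{s,t}$, contradicting the hypothesis. I expect this sub-claim — promoting common neighbourhoods to honest Berge subhypergraphs via triangle-freeness together with linearity and $s\ge 2$ — to be the main obstacle, the remaining steps being the routine double count and the application of Lemma \ref{comb-ineq}.
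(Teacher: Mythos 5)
Your proposal is correct and takes essentially the same route as the paper: the same double count of pairs $(S,b)$ with $S\in\binom{V_1}{s}$ and $b$ a common neighbour, the same use of linearity together with $\B_{r}(C_{3})$-freeness to show the $st$ witnessing edges are distinct (edges with different heads are separated by the hm-bipartition, and a coincidence at a common head yields either a linearity violation or a Berge-$C_3$), and the same application of Lemma \ref{comb-ineq} to get $\sum_{b\in V_2}d(b)=(r-1)e(H)\le (t-1)^{1/s}mn^{1-1/s}+(s-1)n$. The only cosmetic difference is that the paper splits off the trivial case $m\le s-1$ and bounds $e(H)\le mn/(r-1)$ directly there, whereas your uniform application of Lemma \ref{comb-ineq} also covers it, since in that case $d(b)\le m\le s-1$ makes both sides of the double-count inequality vanish and the conclusion holds trivially.
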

	
\begin{proof}
For a fixed set $\left\{v_{1},v_{2},\dots,v_{s}\right\}\subseteq V_{1}$,
set
\[N_{V_{2}}(v_{1},v_{2},\dots,v_{s})=\left\{v\in V_{2}:\left\{v_{1},v_{2},\dots,v_{s} \right\}\subseteq N_{H}(v)\right\}.\]
Since $H$ is linear, if there is an edge containing $\left\{a,b\right\}$, then $\left\{a,b\right\}$ is contained in a unique edge of $H$, denoted by $e_{ab}$.
 Clearly, if $v_{0}\in N_{V_{2}}(v_{1},v_{2},\dots,v_{s})$, then $e_{v_{0}v_{1}},e_{v_{0}v_{2}},\dots,e_{v_{0}v_{s}}$ are mutually distinct edges as $H$ is hm-bipartite.
 If $v_{0},v_{0}^{\prime}\in N_{V_{2}}(v_{1},v_{2},\dots,v_{s})$, then $e_{v_{0}v_{1}},e_{v_{0}v_{2}},\dots,e_{v_{0}v_{s}},e_{v_{0}^{\prime }v_{1}}$, $e_{v_{0}^{\prime }v_{2}},\dots,e_{v_{0}^{\prime }v_{s}}$ are mutually distinct edges that form a Berge-$K_{s,2}$ as $H$ is hm-bipartite;
otherwise, it can be assumed that $e_{v_0 v_1}=e_{v'_0 v_1}=:e$, then $e,e_{v_{0}v_{2}},e_{v_{0}^{\prime }v_{2}}$ would form a Berge-$C_{3}$, which yields a contradiction to the assumption on $H$.

 Now, consider the number of the pair
 \[\left\{(v,\left\{v_{1},v_{2},\dots,v_{s}\right\}):v\in V_{2},\left\{v_{1},v_{2},\dots,v_{s}\right\}\subseteq N_{H}(v) \cap V_1 \right\}.
 \]
 By a simple double counting, noting that $H$ is hm-bipartite and exact $\B_r(K_{s,t})$-free,
 if $m\ge s$,
  \[\sum_{v\in V_{2}}\binom{d(v)}{s}\le (t-1)\binom{m}{s}.\]
  By Lemma \ref{comb-ineq}, we have
   \[(r-1)e(H)=\sum_{v\in V_{2}}d(v)\le(t-1)^{\frac{1}{s}}mn^{1-\frac{1}{s}}+(s-1)n, \]
which implies the upper bound in Eq. (\ref{edge-Kst-0-ineq}).

If $m\le s-1$, surely $H$ is exact $\B_r(K_{s,t})$-free.
As $H$ is an $(m,n)$-hm-bipartite and linear $r$-uniform hypergraph,
  each vertex $v$ of $V_1$ has degree $d(v) \le \frac{n}{r-1}$.
So,
\[
e(H) = \sum_{v \in V_1} d(v) \le \frac{mn}{r-1} \le \frac{s-1}{r-1}n,
\]
which also implies the upper bound in Eq. (\ref{edge-Kst-0-ineq}).
\end{proof}

\begin{theorem}\label{edge-Kst-k}
Let $H$ be an $(m,n)$-hm-bipartite exact $\B_{r}(K_{s,t})$-free linear $r$-uniform hypergraph with an hm-bipartition $V(H)=V_{1}\cup V_{2}$ satisfying $|V_{1}|=m, |V_{2}|=n$, where $s\ge 2$, $t\ge 2$.
Suppose further that $H$ is $\B_{r}(C_{3})$-free.
Then for  $k=0,1,\ldots,t-2$,
\begin{equation}\label{BKstC3}
e(H)\le\frac{(t-k-1)^{\frac{1}{s}}}{r-1}mn^{1-\frac{1}{s}}+\frac{s-1}{r-1}n^{1+\frac{k}{s}}+km.
\end{equation}
\end{theorem}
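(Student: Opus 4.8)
The plan is to induct on $k$, the base case $k=0$ being exactly Lemma~\ref{edge-Kst-0} (here $n^{1+0/s}=n$ and the final summand $km$ vanishes). Throughout I reformulate the forbidden-subhypergraph hypotheses as a co-degree cap: for an $s$-subset $S=\{v_{1},\dots,v_{s}\}\subseteq V_{1}$ write $c(S):=|N_{V_{2}}(v_{1},\dots,v_{s})|$, and recall that exact $\B_{r}(K_{s,t})$-freeness, combined with linearity and $\B_{r}(C_{3})$-freeness exactly as in the proof of Lemma~\ref{edge-Kst-0}, forces $c(S)\le t-1$ for every $S$. (If $m\le s-1$ there are no such $S$, and then $e(H)=\sum_{u\in V_{1}}d(u)\le \frac{mn}{r-1}\le\frac{s-1}{r-1}n\le\frac{s-1}{r-1}n^{1+k/s}$, so~\eqref{BKstC3} holds trivially; I henceforth assume $m\ge s$.) The engine of the proof is then a deletion statement: I would show that one can remove from $H$ a set $R$ of edges so that in $H-R$ every $s$-subset of $V_{1}$ has at most $t-k-1$ common neighbours in $V_{2}$, while keeping $|R|$ under control.

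Granting such an $R$, the bound follows quickly. Since every $s$-subset of $V_{1}$ has at most $t-k-1$ common neighbours in $H-R$, double counting the pairs $(v,S)$ with $v\in V_{2}$ and $S\in\binom{N_{H-R}(v)\cap V_{1}}{s}$ gives $\sum_{v\in V_{2}}\binom{d_{H-R}(v)}{s}\le(t-k-1)\binom{m}{s}$, so applying Lemma~\ref{comb-ineq} with its parameter set to $s$ and with $n$ summands yields
\[
\sum_{v\in V_{2}}d_{H-R}(v)\le(t-k-1)^{\frac1s}mn^{1-\frac1s}+(s-1)n.
\]
Every deleted edge carries exactly $r-1$ vertices of $V_{2}$, so $\sum_{v\in V_{2}}\big(d(v)-d_{H-R}(v)\big)=(r-1)|R|$, and hence
\[
(r-1)e(H)=\sum_{v\in V_{2}}d(v)\le(t-k-1)^{\frac1s}mn^{1-\frac1s}+(s-1)n+(r-1)|R|.
\]
The first two terms, after dividing by $r-1$, already reproduce the first two terms of~\eqref{BKstC3} with the correct coefficient $(t-k-1)^{\frac1s}$, so the whole theorem reduces to the single deletion estimate
\[
|R|\le\frac{s-1}{r-1}\big(n^{1+\frac ks}-n\big)+km.
\]
I would construct $R$ by peeling the excess common neighbourhoods off one level at a time, lowering the cap from $t-1$ through $t-2,\dots$ down to $t-k-1$ in $k$ rounds, so that the cumulative bound on $|R|$ meets the target; the linear structure enters here only in the mild form that two vertices of $V_{2}$ share at most one edge, so deleting an edge at one mass vertex drops the degree of any other mass vertex by at most one.

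The main obstacle is precisely this deletion estimate. The naive ``one edge per maximal $s$-set'' removal is hopeless, since there can be on the order of $\binom{m}{s}$ maximal $s$-sets, whereas the target only permits of order $n^{1+k/s}+km$ deletions; the count must instead exploit that deleting a single edge simultaneously lowers $c(S)$ for every $s$-set $S$ through that mass vertex. I expect to control each round by a Füredi-type auxiliary estimate: fixing an $(s-1)$-subset $T\subseteq V_{1}$ and passing to the bipartite ``link'' between $V_{1}\setminus T$ and $\{v\in V_{2}:T\subseteq N_{H}(v)\}$, whose degrees on the $V_{1}$-side are capped by the current co-degree bound, one bounds the number of edge deletions forced to lower that cap, and then sums over all $(s-1)$-sets $T$ --- invoking Lemma~\ref{comb-ineq} again, now at the $(s-1)$-level --- which is what manufactures the growth $n^{1+k/s}$ and the additive $km$. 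Pinning this counting down tightly enough to land on exactly $\frac{s-1}{r-1}(n^{1+k/s}-n)+km$, rather than a lossier bound, together with honestly tracking the factor $r-1$ and the $V_{2}$-multiplicities of deleted edges, is what distinguishes the argument from Füredi's purely graph-theoretic original.
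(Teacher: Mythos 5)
Your reduction is arithmetically sound as far as it goes: granting a set $R$ whose removal lowers every co-degree $c(S)$ to at most $t-k-1$ with $|R|\le\frac{s-1}{r-1}\big(n^{1+\frac ks}-n\big)+km$, the double count $\sum_{v\in V_2}\binom{d_{H-R}(v)}{s}\le(t-k-1)\binom{m}{s}$ together with Lemma~\ref{comb-ineq} does deliver Eq.~(\ref{BKstC3}), your cap $c(S)\le t-1$ is correctly extracted from the hypotheses, and your handling of $m\le s-1$ is fine. But the deletion estimate --- which you yourself flag as ``the main obstacle'' --- is never proved; you offer only a plan (``peeling'' the cap one level per round, controlled by a F\"uredi-type count over $(s-1)$-sets $T\subseteq V_1$). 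Since that estimate is the entire content of the theorem, what you have is a proof outline with its engine missing. Concretely, each peeling round needs an upper bound on how many edge deletions suffice to lower the cap by one; the hypotheses supply co-degree \emph{upper} bounds, i.e.\ information in the wrong direction for bounding the number of high-co-degree witnesses, and summing your auxiliary estimate over all $\binom{m}{s-1}$ sets $T$ threatens totals of order $m^{s-1}$ against a budget that is only linear in $m$.

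There is also a structural reason to doubt that any local peeling can close this gap: the deletion lemma is essentially as strong as the theorem itself. If $H$ is near-extremal at cap $t-1$, i.e.\ $e(H)\approx\frac{(t-1)^{1/s}}{r-1}mn^{1-\frac1s}$, then after removal the same double count gives $e(H-R)\le\frac{(t-k-1)^{1/s}}{r-1}mn^{1-\frac1s}+\frac{s-1}{r-1}n$, so $|R|\ge\frac{(t-1)^{1/s}-(t-k-1)^{1/s}}{r-1}mn^{1-\frac1s}-\frac{s-1}{r-1}n$; your budget then forces $mn^{1-\frac1s}=O\big(n^{1+\frac ks}+m(r-1)\big)$, i.e.\ essentially $m=O\big(n^{\frac{k+1}{s}}\big)$. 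So the deletion lemma can only be true because dense cap-$(t-1)$ configurations cannot have $m$ much larger than $n^{(k+1)/s}$ --- which is exactly the trade-off Theorem~\ref{edge-Kst-k} encodes, so you would be assuming a form of the conclusion. The paper obtains this global trade-off without any removal: it inducts on $k$ directly, applying the inductive hypothesis (for $t-1$ and $k-1$) to the link $H_u:=H[N_{V_1}(u)\cup V_2\setminus\{u\}]$ of each mass vertex $u\in V_2$, which is exact $\B_{r}(K_{s,t-1})$-free and still $\B_r(C_3)$-free; the identity $e(H_u)=\sum_{v\in N_{V_1}(u)}d(v)-d(u)$, summation over $u\in V_2$, and the Cauchy--Schwarz step $\sum_{v\in V_1}d(v)^{2}\ge e(H)^{2}/m$ (this is where the $m$-dependence enters) produce a quadratic inequality in $e(H)$ resolved by Fact~\ref{root}(2). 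The most direct repair of your argument is to discard $R$ altogether and run this link induction.
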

	
\begin{proof}
We will use induction on $k$. For $k=0$, the assertion is given by Lemma \ref{edge-Kst-0}. Suppose $k\ge 1$ and assume that the assertion is true for all $k^{\prime}<k$.
	
For any vertex $u\in V_2$, write $N_{V_1}(u)=N_H(u) \cap V_1$ and set
 \[H_{u}:=H[N_{V_{1}}(u)\cup V_{2}\setminus\left\{u\right\}]. \]
 Clearly, $d(u)=|N_{V_{1}}(u)|$ as $H$ is linear and hm-bipartite.
Further $H_{u}$ is an $(d(u), n-1)$-hm-bipartite exact $\B_{r}(K_{s,t-1})$-free linear $r$-uniform hypergraph; otherwise, $H$ would not be exact $\B_{r}(K_{s,t})$-free.
Surely $H_u$ is still $\B_r(C_3)$-free.

Note that $k\ge 1$ and therefore $t-1 \ge 2$.
By the induction assumption applied for $t-1$ and $k-1$ in $H_{u}$,
\begin{equation}\label{KstC3-1}
\begin{aligned}
e(H_{u})& \le\frac{(t-k-1)^{\frac{1}{s}}}{r-1}d(u)(n-1)^{1-\frac{1}{s}}
+\frac{s-1}{r-1}(n-1)^{1+\frac{k-1}{s}}+(k-1)d(u)\\
& \le \frac{(t-k-1)^{\frac{1}{s}}}{r-1}d(u)n^{1-\frac{1}{s}}
+\frac{s-1}{r-1}n^{1+\frac{k-1}{s}}+(k-1)d(u).
\end{aligned}
\end{equation}
On the other hand, as $H$ is hm-bipartite,
\begin{equation}\label{KstC3-2}
e(H_{u})=e\left(H[N_{V_{1}}(u)\cup V_{2}]\right)-d(u)=\sum_{v\in N_{V_{1}}(u)}d(v)-d(u).
\end{equation}
Thus, by Eqs. (\ref{KstC3-1}) and (\ref{KstC3-2}),
\begin{equation}\label{KstC3-3}
\sum_{v\in N_{V_{1}}(u)}d(v)\le\frac{(t-k-1)^{\frac{1}{s}}}{r-1}d(u)n^{1-\frac{1}{s}}
+\frac{s-1}{r-1}n^{1+\frac{k-1}{s}}+kd(u).
\end{equation}
Summing Eq. (\ref{KstC3-3}) for all $u\in V_{2}$, and noting that
$\sum_{u\in V_{2}}d(u)=(r-1)e(H)$,  we get
\[\begin{aligned}
  \sum_{u\in V_{2}}\sum_{v\in N_{V_{1}}(u)}d(v)
  &\le\frac{(t-k-1)^{\frac{1}{s}}}{r-1}\left(\sum_{u\in V_{2}}d(u)\right)n^{1-\frac{1}{s}}+\frac{s-1}{r-1}n^{2+\frac{k-1}{s}}+k\left(\sum_{u\in V_{2}}d(u)\right)\\
  &=(t-k-1)^{\frac{1}{s}}n^{1-\frac{1}{s}}e(H)+\frac{s-1}{r-1}n^{2+\frac{k-1}{s}}+k(r-1)e(H),
\end{aligned}\]
Meanwhile,
\[\begin{aligned}
   \sum_{u\in V_{2}}\sum_{v\in N_{V_{1}}(u)}d(v)
   &=\sum_{v\in V_1}\sum_{u \in N_{V_2}(v)}d(v)=(r-1)\sum_{v\in V_{1}}d(v)^{2}\\
   &\ge\frac{(r-1)\left(\sum_{v\in V_{1}}d(v)\right)^{2}}{m}=\frac{(r-1)e(H)^{2}}{m}.
\end{aligned}\]
Therefore,
\[e(H)^{2}-\left(\frac{(t-k-1)^{\frac{1}{s}}}{r-1}mn^{1-\frac{1}{s}}+km\right)e(H)-\frac{(s-1)mn^{2+\frac{k-1}{s}}}{(r-1)^{2}}\le 0. \]
So,
\[\begin{aligned}
e(H)&\le\frac{(t-k-1)^{\frac{1}{s}}}{r-1}mn^{1-\frac{1}{s}}+km
+\frac{\frac{(s-1)mn^{2+\frac{k-1}{s}}}{(r-1)^{2}}}{\frac{(t-k-1)^{\frac{1}{s}}}{r-1}mn^{1-\frac{1}{s}}+km}\\
&=\frac{(t-k-1)^{\frac{1}{s}}}{r-1}mn^{1-\frac{1}{s}}+km
+\frac{(s-1)n^{2+\frac{k-1}{s}}}{(r-1)\left((t-k-1)^{\frac{1}{s}}n^{1-\frac{1}{s}}+k(r-1)\right)}\\
&\le\frac{(t-k-1)^{\frac{1}{s}}}{r-1}mn^{1-\frac{1}{s}}+km+\frac{s-1}{r-1}n^{1+\frac{k}{s}},
\end{aligned}\]
where the first inequality applies Fact \ref{root}(2).
The result now follows.
\end{proof}

The right-hand side of the inequality (\ref{BKstC3}) is very interesting,
and the key is to see how much $k$ is taken to minimize the right-hand side of the inequality.
For more details, one can refer to \cite{Niki2010}.
If $2 \le s \le t$, letting $k=s-2$, we obtain the following corollary.
	
\begin{corollary}\label{edge-Kst-s-2}
Let $H$ be an $(m,n)$-hm-bipartite exact $B_{r}(K_{s,t})$-free linear $r$-uniform hypergraph with an hm-bipartition $V(H)=V_{1}\cup V_{2}$ satisfying $|V_{1}|=m, |V_{2}|=n$, where $2 \le s \le t$.
Suppose further that $H$ is $\B_{r}(C_{3})$-free.
Then
\[e(H)\le\frac{(t-s+1)^{\frac{1}{s}}}{r-1}mn^{1-\frac{1}{s}}+\frac{s-1}{r-1}n^{2-\frac{2}{s}}+(s-2)m.\]
\end{corollary}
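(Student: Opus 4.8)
The plan is to obtain this corollary as a direct specialization of Theorem \ref{edge-Kst-k}, choosing the free parameter $k$ to be $s-2$. Recall that Theorem \ref{edge-Kst-k} supplies, for every admissible $k\in\{0,1,\dots,t-2\}$, the bound
\[e(H)\le\frac{(t-k-1)^{\frac{1}{s}}}{r-1}mn^{1-\frac{1}{s}}+\frac{s-1}{r-1}n^{1+\frac{k}{s}}+km,\]
and the claimed corollary is exactly the value of this expression at $k=s-2$. So the first thing I would do is verify that $k=s-2$ lies in the admissible range. Since the hypothesis is $2\le s\le t$, we have $s-2\ge 0$ and $s-2\le t-2$, hence $0\le s-2\le t-2$, which is precisely the condition needed to invoke Theorem \ref{edge-Kst-k} with this value.

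With $k=s-2$ fixed, the remainder is a substitution-and-simplification of the three terms. In the leading term, $t-k-1$ becomes $t-(s-2)-1=t-s+1$, so it reads $\frac{(t-s+1)^{\frac{1}{s}}}{r-1}mn^{1-\frac{1}{s}}$, matching the target. The last term $km$ becomes $(s-2)m$ immediately. The only term requiring a brief check is the middle one, whose exponent simplifies as $1+\frac{k}{s}=1+\frac{s-2}{s}=\frac{2s-2}{s}=2-\frac{2}{s}$, so that $\frac{s-1}{r-1}n^{1+\frac{k}{s}}=\frac{s-1}{r-1}n^{2-\frac{2}{s}}$, again as claimed. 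These three pieces then assemble exactly into the right-hand side of the stated inequality.

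There is essentially no analytic obstacle: the corollary is an optimization of Theorem \ref{edge-Kst-k} over the discrete parameter $k$, and its content lies entirely in the assertion that $k=s-2$ is a good (indeed the minimizing) choice. The only points deserving care are the admissibility check $0\le s-2\le t-2$ above and the exponent bookkeeping $1+\frac{s-2}{s}=2-\frac{2}{s}$. One could add, following the discussion preceding the corollary and the reference to Nikiforov \cite{Niki2010}, that $k=s-2$ actually minimizes the right-hand side for large $n$; but to prove the inequality itself it suffices that $k=s-2$ is one admissible value, after which the bound follows by plain substitution.
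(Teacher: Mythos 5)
Your proposal is correct and matches the paper's own derivation exactly: the paper obtains this corollary by setting $k=s-2$ in Theorem \ref{edge-Kst-k}, just as you do. Your added checks — that $2\le s\le t$ guarantees $0\le s-2\le t-2$, and that $1+\frac{s-2}{s}=2-\frac{2}{s}$ — are precisely the bookkeeping the paper leaves implicit.
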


Now we arrive at the main result in this subsection.
	
\begin{theorem}\label{mainresult}
Let $H$ be a $\left\{\B_{r}(K_{s,t}),\B_{r}(C_{3})\right\}$-free linear $r$-uniform hypergraph on $n$ vertices, where $2 \le s\le t$.
If $s=2$, then
\[\rho(H)\le\frac{\sqrt{4(t-1)(n-1)+(r-t)^{2}}+r-t}{2(r-1)}.\]
If $s\ge 3$, then
\[\rho(H)\le \frac{(t-s+1)^{\frac{1}{s}}}{r-1}n^{1-\frac{1}{s}}+\frac{s-1}{r-1}n^{1-\frac{2}{s}}+s-2.\]
\end{theorem}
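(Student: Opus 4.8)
The plan is to pass to the $2$-shadow $\partial H$ and run a Nikiforov-type eigenvector argument there. Since $H$ is linear, $\partial H$ is a \emph{simple} graph, and by Lemma \ref{spec-2shadow} it suffices to bound $\rho(\partial H)$ and then multiply by $\frac{1}{r-1}$. Rather than feeding a neighbourhood degree-sum inequality into Corollary \ref{spec-ineq-degree} (which is too lossy to reach these constants), I would argue directly on $\partial H$, first recording how the two Berge conditions translate into codegree information. Because $H$ is $\B_r(C_3)$-free and linear, the induced graph $\partial H[N_{\partial H}(u)]$ is a disjoint union of $(r-1)$-cliques, one per hyperedge through $u$, exactly as in the proof of Theorem \ref{spec-63-prob}; consequently every pair $u,w$ lying in a common hyperedge has codegree $c(u,w)$ in $\partial H$ equal to \emph{exactly} $r-2$, the other vertices of that edge. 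On the other hand, if $u,w$ lie in no common edge, then $t$ common $\partial H$-neighbours would, by linearity together with $\B_r(C_3)$-freeness (which forces the $2t$ connecting edges to be distinct, as in Lemma \ref{edge-Kst-0}), produce a Berge-$K_{2,t}$; hence such a pair has codegree at most $t-1$. For $s\ge 3$ I will also use the $s$-wise analogue: any $s$ pairwise non-co-edge vertices have at most $t-1$ common $\partial H$-neighbours.

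For $s=2$ this is already enough. Let $\x$ be the Perron vector of $\partial H$ normalised so that $x_u=\max_v x_v=1$, write $\rho'=\rho(\partial H)$ and $N=N_{\partial H}(u)$, and expand $\rho'^2 = d_{\partial H}(u)+\sum_{w\ne u}c(u,w)x_w$. I would split the sum according to whether $w\in N$ or not: the co-edge part contributes $\sum_{w\in N}(r-2)x_w=(r-2)\rho'$, while the rest is at most $(t-1)\sum_{w\notin N\cup\{u\}}x_w\le (t-1)(n-1-d_{\partial H}(u))$. Using $\rho'=\sum_{w\in N}x_w\le d_{\partial H}(u)$ together with $t\ge 2$ to absorb $(2-t)d_{\partial H}(u)\le (2-t)\rho'$, these combine to $\rho'^2-(r-t)\rho'-(t-1)(n-1)\le 0$. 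Solving this quadratic and dividing by $r-1$ yields precisely the stated bound.

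For $s\ge 3$ the pairwise codegree bound is no longer available, and I would instead mimic the $s$-fold K\H{o}v\'ari--S\'os--Tur\'an count underlying Lemma \ref{edge-Kst-0}, weighted by the eigenvector. Starting from $\rho'^s\sum_v x_v^s=\sum_v\big(\sum_{w\sim v}x_w\big)^s$, I would expand the right-hand side over ordered $s$-tuples of common neighbours and group them by their underlying set. Ordered tuples whose vertices are pairwise non-co-edge are governed by the $s$-wise bound $t-1$, and summing them gives a main term bounded by $(t-1)\big(\sum_w x_w\big)^s$; the remaining tuples---those containing two vertices of a common hyperedge, or a repeated vertex---are controlled by the $(r-1)$-clique structure forced by $\B_r(C_3)$-freeness and should contribute only lower-order terms carrying the factor $r-1$. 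Feeding the resulting inequality through Füredi's Lemma \ref{comb-ineq} (with $k=s$) and the power-mean inequalities relating $\sum_w x_w$, $\sum_v x_v^s$ and $\rho'$, and finishing with Fact \ref{root}, should produce $\rho(\partial H)\le (t-s+1)^{1/s}n^{1-1/s}+(s-1)n^{1-2/s}+(r-1)(s-2)$, which becomes the stated estimate after division by $r-1$.

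The $s=2$ case is essentially complete as sketched; the real work is $s\ge 3$. The main obstacle there is the bookkeeping for the degenerate $s$-tuples: one must show that tuples lying inside a common hyperedge-clique (where the codegree can reach $r-2$ rather than $t-1$) contribute exactly the additive constant $(r-1)(s-2)$ and nothing of higher order, while re-running the linearity-plus-$\B_r(C_3)$ distinctness argument of Lemma \ref{edge-Kst-0} at the level of $s$ vertices to secure the $t-1$ bound for the spread sets. Getting the power-mean/Füredi optimisation to land on the exponents $1-\tfrac{1}{s}$ and $1-\tfrac{2}{s}$ with the sharp coefficients $(t-s+1)^{1/s}$ and $s-1$ is the delicate quantitative step.
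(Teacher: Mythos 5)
Your $s=2$ argument is correct, and it is a genuinely different route from the paper's. You work on the $2$-shadow: the codegree dichotomy you establish (co-edge pairs have codegree exactly $r-2$ by linearity plus $\B_{r}(C_{3})$-freeness, non-co-edge pairs have codegree at most $t-1$ since the $2t$ connecting edges are forced to be distinct) is sound, and the expansion $\rho'^{2}=d_{\p H}(u)+\sum_{w\ne u}c(u,w)x_{w}$ with the absorption $(2-t)d_{\p H}(u)\le(2-t)\rho'$ legitimately yields $\rho'^{2}-(r-t)\rho'-(t-1)(n-1)\le 0$, which after Lemma \ref{spec-2shadow} is exactly the stated bound. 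The paper instead performs the same codegree count at the hypergraph level, double counting pairs $(v,w)$ with $v\in N(u)$, $w\in N^{2}(u)\cap N(v)$ to get $\sum_{v\in N(u)}d(v)\le(r-t)d(u)+\frac{(t-1)(n-1)}{r-1}$, and feeds this into Corollary \ref{spec-ineq-degree}, obtaining $\rho^{2}-\frac{r-t}{r-1}\rho-\frac{(t-1)(n-1)}{(r-1)^{2}}\le 0$ --- your quadratic rescaled by $r-1$. So your premise that Corollary \ref{spec-ineq-degree} is ``too lossy to reach these constants'' is mistaken: the paper reaches precisely these constants through it, and the two routes are of equal strength here.

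For $s\ge 3$, however, your sketch has a genuine gap in the main term. A single application of the $s$-wise codegree bound ($s$ pairwise non-co-edge vertices have at most $t-1$ common neighbours) produces a main term carrying the factor $t-1$, and Lemma \ref{comb-ineq} or any power-mean step can only convert that into the coefficient $(t-1)^{\frac{1}{s}}$, never $(t-s+1)^{\frac{1}{s}}$; no bookkeeping of the degenerate tuples can change this, since they only affect lower-order terms. The improvement from $t-1$ to $t-s+1$ requires F\"uredi's neighbourhood-peeling induction, which the paper builds in as the parameter $k$ of Theorem \ref{edge-Kst-k}: the link $H_{u}$ of $u\in V_{2}$ is exact $\B_{r}(K_{s,t-1})$-free, so each inductive step trades one unit of $t$ against the lower-order terms $\frac{s-1}{r-1}n^{1+\frac{k}{s}}+km$, closed by Cauchy--Schwarz and Fact \ref{root}(2). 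The paper then localizes at $u$: $H(u)=H[N_{H}(u),N_{H}^{2}(u)]$ is hm-bipartite, $\B_{r}(C_{3})$-free and exact $\B_{r}(K_{s,t-1})$-free, Theorem \ref{edge-Kst-k} with $k=s-3$ gives the degree-sum inequality with leading coefficient $(t-s+1)^{\frac{1}{s}}$, and Corollary \ref{spec-ineq-degree} plus Fact \ref{root}(2) finish. Your plan contains no mechanism that ever decreases $t$; to repair it you would either have to reproduce this induction on the shadow (essentially re-proving Theorem \ref{edge-Kst-k}) or invoke Theorem \ref{edge-Kst-k} directly as the paper does. Secondarily, your claim that the degenerate tuples contribute ``exactly $(r-1)(s-2)$'' is unsubstantiated: their natural bound is $r-2$ times an eigenvector-weighted sum over co-edge pairs, which is of order $\rho'\bigl(\sum_{w}x_{w}\bigr)^{s-1}$ and would still need careful absorption --- but that issue is moot until the main coefficient is fixed.
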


\begin{proof}
First, consider the case of $s=2$.
 For any $u\in V(H)$, using double counting on the following set of pairs:
 $$ \{(v,w): v \in N(u), w \in N^2(u) \cap N(v)\},$$
and noting that $H$ is linear and $\left\{\B_{r}(K_{2,t}),\B_{r}(C_{3})\right\}$-free,
we have
\[\sum_{v\in N(u)}(r-1)(d(v)-1)\le(t-1)\left(n-1-(r-1)d(u)\right), \]
that is $$\sum_{v\in N(u)}d(v)\le(r-t)d(u)+\frac{(t-1)(n-1)}{r-1}.$$
By Corollary \ref{spec-ineq-degree}, we get $$\rho^{2}(H)-\frac{r-t}{r-1}\rho(H)-\frac{(t-1)(n-1)}{(r-1)^{2}}\le 0.$$
So,
 \[\rho(H)\le\frac{\sqrt{4(t-1)(n-1)+(r-t)^{2}}+r-t}{2(r-1)}. \]

 Next we consider the case of $s \ge 3$.
 For any $u\in V(H)$, set $H(u):=H[N_H(u),N_H^{2}(u)]$.
 Obviously $H(u)$ is an hm-bipartite $\B_{r}(C_{3})$-free linear $r$-uniform hypergraph with the head part $N_H(u)$ and the mass part $N_H^2(u)$.
 Furthermore, $H(u)$ is exact $\B_{r}(K_{s,t-1})$-free; otherwise $H$ contains a subhypergraph in $\B_{r}(K_{s,t})$.
 Note that
 $|N_H(u)|=(r-1)d(u)=:n_1, |N_H^2(u)|=:n_2 \le n-1-(r-1)d(u)$.
 So, $H(u)$ is an $(n_1,n_2)$-hm-bipartite exact $\B_{r}(K_{s,t-1})$-free $r$-uniform hypergraphs.
 As  $t\ge s\ge 3$, by Theorem \ref{edge-Kst-k},
 for every $0\le k\le t-3$,
  \[\begin{aligned}
 e\left(H(u)\right)& \le \frac{(t-k-2)^{\frac{1}{s}}}{r-1} n_1 n_2^{1-\frac{1}{s}}
 + \frac{s-1}{r-1}n_2^{1+\frac{k}{s}}+k n_1 \\
 & \le\left((t-k-2)^{\frac{1}{s}}n^{1-\frac{1}{s}}+k(r-1)\right)d(u)
 +\frac{s-1}{r-1}n^{1+\frac{k}{s}}.
 \end{aligned}\]
   Meanwhile, $e\left(H(u)\right)=\sum_{v\in N_{H}(u)}d(v)-(r-1)d(u)$.
  Thus,
  \[\sum_{v\in N_{H}(u) }d(v)\le\left((t-k-2)^{\frac{1}{s}}n^{1-\frac{1}{s}}+(k+1)(r-1)\right)d(u)
  +\frac{s-1}{r-1}n^{1+\frac{k}{s}}.
  \]
  By Corollary \ref{spec-ineq-degree}, \[\rho^{2}(H)-\frac{(t-k-2)^{\frac{1}{s}}n^{1-\frac{1}{s}}+(k+1)(r-1)}{r-1}\rho(H)-\frac{s-1}{(r-1)^{2}}n^{1+\frac{k}{s}}
  \le 0.
  \]
  Using Fact \ref{root}(2), we get \[\begin{aligned}
  \rho(H)&\le\frac{(t-k-2)^{\frac{1}{s}}n^{1-\frac{1}{s}}+(k+1)(r-1)}{r-1}+\frac{\frac{s-1}{(r-1)^{2}}n^{1+\frac{k}{s}}}{\frac{(t-k-2)^{\frac{1}{s}}n^{1-\frac{1}{s}}+(k+1)(r-1)}{r-1}}\\
  &\le\frac{(t-k-2)^{\frac{1}{s} }n^{1-\frac{1}{s}}}{r-1}+\frac{s-1}{r-1}n^{\frac{k+1}{s}}+k+1.
   \end{aligned}\]
  Setting $k=s-3$, we get the desired result.
\end{proof}

By Eq. (\ref{AveD}),  we obtain the following corollary immediately.

\begin{corollary}\label{main-coro}
For $t \ge 2$,
\begin{equation}\label{exk2t}
\ex_{r}^{\text{lin}}(n,\left\{\B_{r}(K_{2,t}),\B_{r}(C_{3})\right\}) \le \frac{\sqrt{4(t-1)(n-1)+(r-t)^{2}}+r-t}{2r(r-1)}n;
\end{equation}
and for $3\le s\le t$,
\begin{equation}\label{exkst}\ex_{r}^{\text{lin}}(n,\left\{\B_{r}(K_{s,t}),\B_{r}(C_{3})\right\})
\le
\frac{(t-s+1)^{\frac{1}{s}}}{r(r-1)}n^{2-\frac{1}{s}}+\frac{s-1}{r(r-1)}n^{2-\frac{2}{s}}+\frac{s-2}{r}n.
\end{equation}
\end{corollary}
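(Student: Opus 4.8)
The plan is to reduce both cases to a single ``second-neighbourhood degree inequality'' of the form $\sum_{v\in N(u)}d(v)\le P\,d(u)+Q$ valid for every vertex $u$, and then feed it straight into Corollary \ref{spec-ineq-degree} to obtain $\rho^2-\frac{P}{r-1}\rho-\frac{Q}{r-1}\le 0$, which I then solve for $\rho$ (exactly by the quadratic formula when $s=2$, and via Fact \ref{root}(2) when $s\ge 3$). The only real work is to extract the correct $P$ and $Q$ from the forbidden-subhypergraph hypotheses; the values differ for $s=2$, where a direct count suffices, and for $s\ge 3$, where I route through the edge bound of Theorem \ref{edge-Kst-k}.

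For $s=2$ I would fix $u$ and double count the pairs $(v,w)$ with $v\in N(u)$ and $w\in N^2(u)\cap N(v)$, i.e. the length-$2$ Berge paths $u\,e\,v\,f\,w$ whose endpoint $w$ lies at distance exactly two. Counting from the $v$ side: since $H$ is linear, each $v\in N(u)$ lies in one edge $e_{uv}$ through $u$ and in $d(v)-1$ other edges, and Berge-$C_3$-freeness forces every vertex of those other edges to avoid $N(u)\cup\{u\}$ (a common neighbour of $u$ and $v$ would close a Berge triangle), so exactly $(r-1)(d(v)-1)$ of $v$'s neighbours fall in $N^2(u)$. Counting from the $w$ side: if some $w\in N^2(u)$ had $t$ neighbours $v_1,\dots,v_t$ in $N(u)$, then $\{u,w\}$ against $\{v_1,\dots,v_t\}$ would realise a Berge-$K_{2,t}$, so each $w$ contributes at most $t-1$. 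Using $|N(u)|=(r-1)d(u)$ and $|N^2(u)|\le n-1-(r-1)d(u)$, these two counts yield $\sum_{v\in N(u)}d(v)\le (r-t)d(u)+\frac{(t-1)(n-1)}{r-1}$, and Corollary \ref{spec-ineq-degree} followed by the quadratic formula gives the stated bound.

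For $s\ge 3$ I would localise at each $u$ by studying the bipartite-between-shells hypergraph $H(u):=H[N(u),N^2(u)]$. Two structural facts drive everything. First, Berge-$C_3$-freeness forces every edge of $H(u)$ to meet $N(u)$ in a single vertex and to place its remaining $r-1$ vertices in $N^2(u)$, so $H(u)$ is genuinely $((r-1)d(u),\,n_2)$-hm-bipartite with head $N(u)$ and mass $N^2(u)$, where $n_2\le n-1-(r-1)d(u)$, and it remains linear and Berge-$C_3$-free. Second, if $H(u)$ contained an exact Berge-$K_{s,t-1}$ (its $s$-part in $N(u)$, its $(t-1)$-part in $N^2(u)$), then adjoining $u$ to the mass part, using the $s$ edges $e_{uv_i}$, would build a Berge-$K_{s,t}$ in $H$; hence $H(u)$ is exact $\B_r(K_{s,t-1})$-free. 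I can therefore apply Theorem \ref{edge-Kst-k} to $H(u)$ with $t$ replaced by $t-1$ for each admissible $k\in\{0,\dots,t-3\}$; combining this with the identity $e(H(u))=\sum_{v\in N(u)}d(v)-(r-1)d(u)$ and the crude bounds $n_2^{1-1/s}\le n^{1-1/s}$, $n_2^{1+k/s}\le n^{1+k/s}$ produces a degree inequality with $P=(t-k-2)^{1/s}n^{1-1/s}+(k+1)(r-1)$ and $Q=\frac{s-1}{r-1}n^{1+k/s}$. Corollary \ref{spec-ineq-degree} and Fact \ref{root}(2), together with $(t-k-2)^{1/s}\ge 1$ for $k\le t-3$, give $\rho\le\frac{(t-k-2)^{1/s}}{r-1}n^{1-1/s}+\frac{s-1}{r-1}n^{(k+1)/s}+(k+1)$, and optimising by taking $k=s-3$ (legal since $3\le s\le t$) delivers the claimed inequality.

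The crux, and the step I expect to be most delicate, is the two structural claims about $H(u)$ in the case $s\ge 3$: showing that Berge-$C_3$-freeness exactly pins down the hm-bipartite shape (one head vertex and $r-1$ mass vertices per edge, with no leakage of second-shell neighbours back into $N(u)\cup\{u\}$), and that the ``$t$ decremented to $t-1$'' is forced because the apex $u$ promotes any exact $\B_r(K_{s,t-1})$ in $H(u)$ to a $\B_r(K_{s,t})$ in $H$. These are the places where linearity, the distance-two bookkeeping, and the distinctness of the Berge edges must all be verified at once; once they are in place, the remaining analysis is the routine quadratic optimisation already used for $s=2$.
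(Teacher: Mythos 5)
Your structural and spectral work is essentially correct, and it reproduces almost verbatim the paper's proof of Theorem \ref{mainresult}: the $s=2$ double count of pairs $(v,w)$ with $v\in N(u)$, $w\in N^{2}(u)\cap N(v)$ yielding $\sum_{v\in N(u)}d(v)\le(r-t)d(u)+\frac{(t-1)(n-1)}{r-1}$; the verification that $H(u)=H[N(u),N^{2}(u)]$ is hm-bipartite with head $N(u)$, remains linear and $\B_{r}(C_{3})$-free, and is exact $\B_{r}(K_{s,t-1})$-free (your checks here are in fact more detailed than the paper's, which asserts these facts as obvious); and the pipeline through Corollary \ref{spec-ineq-degree}, Theorem \ref{edge-Kst-k}, Fact \ref{root}(2), and the choice $k=s-3$. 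All of these steps are valid, including the distinctness arguments for the edges $e_{uv_i}$, which you correctly enforce via $\B_{r}(C_{3})$-freeness and linearity.

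However, there is a genuine gap at the very end: the statement you were asked to prove bounds $\ex_{r}^{\text{lin}}$, the \emph{number of edges}, while everything you derive bounds the \emph{spectral radius} $\rho(H)$. Your two conclusions are exactly the inequalities of Theorem \ref{mainresult}, not those of Corollary \ref{main-coro}, whose right-hand sides carry an extra factor of $\frac{n}{r}$; so your closing claims that the quadratic formula ``gives the stated bound'' and that $k=s-3$ ``delivers the claimed inequality'' are not true as written. The missing step---which is the entire content of the paper's own (one-line) proof of this corollary---is the average-degree bound Eq.~(\ref{AveD}) of Cooper and Dutle, $\rho(H)\ge \frac{re(H)}{n}$, equivalently $e(H)\le \frac{n}{r}\rho(H)$; applying it to a $\{\B_{r}(K_{s,t}),\B_{r}(C_{3})\}$-free linear $r$-uniform hypergraph attaining $\ex_{r}^{\text{lin}}$ and multiplying your spectral bounds by $\frac{n}{r}$ yields precisely Eqs.~(\ref{exk2t}) and (\ref{exkst}). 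Adding that single line completes the proof, after which your argument coincides with the paper's route (re-proving Theorem \ref{mainresult} along the way rather than citing it).
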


If $(r-t)^2<4(t-1)$, then the upper bound in Eq. (\ref{exk2t})  will be smaller than Timmons' bound in Eq. (\ref{Tim}), but they have the same orders of magnitude.
If taking $r=3$ in the upper bound of Eq. (\ref{exkst}), we will get almost the same bound as in Eq. (\ref{Gao}) given by Gao and Chang only with a slight difference in the coefficient of $n$.
However, they also have the same orders of magnitude in this case.

When considering $\B_{r}(K_{s,t})$-free linear hypergraphs $H$ with removal of condition of $\B_r(C_3)$-free, the upper bound of $\rho(H)$ might become larger.
But can the asymptotic term be left unchanged?
We pose the following problem:

\begin{problem}
For $2 \le s\le t$, does the following inequality hold?
	$$\spex_{r}^{\text{lin}}(n,\B_{r}(K_{s,t}))\le\frac{(t-s+1)^{\frac{1}{s}}}{r-1}n^{1-\frac{1}{s}}+o(n^{1-\frac{1}{s}}).$$
\end{problem}

\section*{Declaration of Competing Interest}
The authors declared that they have no conflicts of interest to this work.

\section*{Data availability}
No data was used for the research described in the article.

\section*{Acknowledgement}
The authors are sincerely grateful to the referees for valuable suggestions to improve the paper.

\end{document}